\newtheorem{Theorem}{Theorem}[section]
\newtheorem{Definition}[Theorem]{Definition}
\newtheorem{Proposition}[Theorem]{Proposition}
\newtheorem{Lemma}[Theorem]{Lemma}
\newtheorem{Corollary}[Theorem]{Corollary}
\newtheorem{Remark}[Theorem]{Remark}
\def\Q{\mathbb Q}
\def\R{\mathbb R}
\def\N{\mathbb N}
\def\E{\mathbb E}
\def\eps{\varepsilon}
\def\ds{\displaystyle}
\def\m{\textfrak{m}}
\newcommand{\esssup}{\operatorname{ess\,sup}}
\newcommand{\essinf}{\operatorname{ess\,inf}}
\newcommand{\one}{1\mkern -4mu\mathrm{l}}
\title[Surface measures in infinite dimension]{\bf Surface measures in infinite dimension}
\author[G. Da Prato]{Giuseppe Da Prato}
\address{Scuola Normale Superiore\\
Piazza dei Cavalieri, 7\\ 
56126 Pisa, Italy}
\email{g.daprato@sns.it}
\author[A. Lunardi]{Alessandra Lunardi}
\address{
Dipartimento di Matematica e Informatica\\
Universit\`a di Parma\\
Parco Area delle Scienze, 53/A\\
43124 Parma, Italy}
\email{alessandra.lunardi@unipr.it}
\author[L. Tubaro]{Luciano Tubaro}
\address{
Dipartimento di Matematica\\
Universit\`a di Trento\\
Via Sommarive 14\\
38123 Povo, Italy}
\email{tubaro@science.unitn.it}
\subjclass[2010]{28C20}
\keywords{Infinite dimensional analysis, surface measures,  Gaussian measures}
\begin{document}

 \begin{abstract}  
 We construct surface measures associated to Gaussian measures in separable Banach spaces, and we prove several properties including an integration by parts formula. 
 \end{abstract}

\maketitle

\section{Introduction} 
 
Let $X$ be a separable Banach space with norm $\|\cdot\|$,  endowed with a nondegenerate centered Gaussian measure $\mu$, with covariance $Q$ and  associated Cameron--Martin space $H$.

We will construct surface measures, defined on level sets $ \{x\in X: \;G(x) = r\}$ for suitable $G:X\mapsto \R$, and prove several properties including an integration by parts formula for Sobolev functions, that involves a surface integral.
 
Surface measures in Banach spaces are not a novelty. The first steps were made in the case of Hilbert spaces, for instance in the book \cite{Sk74} where a class of smooth  surfaces was considered. 
To our knowledge, the earliest results in Banach spaces are due to Uglanov  \cite{Ug}, about surface measures on (unions of) graphs of smooth functions,  and Hertle \cite{Hertle}, that deals only with hyperplanes and spherical surfaces. 

The first systematic treatment for a more general class of surfaces was done by Airault and Malliavin in
\cite{AM}, that concerns level sets of  functions $G\in \cap_{k\in \N, p\geq 1} W^{k,p}(X,\mu)$ satisfying a  nondegeneracy condition, where $X$ is the classical Wiener space.  

The approach of \cite{AM} is through the study of the densities of suitable image measures. The same approach was considered in the books \cite{Boga,Malliavin}  in  more general contexts, and in \cite{Kuo} in the special case where $X$ is the space of the tempered distributions in $\R$. 
  The aim of this paper is to give an alternative simpler and clearer construction and description of surface measures through the image measures approach, under less regularity assumptions on $G$ with respect to \cite{AM,Boga,Malliavin}. 

A completely different approach was followed by Feyel and de La Pradelle in \cite{FP}, who defined a Hausdorff--Gauss  measure $\rho$ on the Borel sets of $X$ by finite dimensional approximations. Another very general surface measure is the perimeter measure, defined as the variation measure of the characteristic function of $\{x\in X:\;G(x)<r \}$ in the case that such characteristic function is of bounded variation, see \cite{FH} and the following papers \cite{AMMP,Hino}. It is known that under some regularity assumption on $G$, the perimeter measure coincides with $\rho$ on the level surfaces of $G$, and they coincide with the   Airault--Malliavin surface measure under further assumptions (\cite{Tracce,CLMN}). 
 
After the construction of the surface measures $\sigma_r^G$, we   show several properties of them, under minimal assumptions: they are non trivial (namely, $\sigma_r^G(X)>0$) for every $r\in (\essinf G, \esssup G)$, the support of $\sigma_r^G$ is contained in $G^{-1}( r)$, Borel sets with null Gaussian capacity are negligible with respect to $\sigma_r^G$ for every $r$, and the integration by parts formula
$$\int_{G^{-1}(-\infty, r)} (D_k\varphi -  \hat{v}_k\varphi ) \,d\mu =   \int _{G^{-1}(  r)}  \varphi   D_kG\, d\sigma_r^G, \quad k\in \N, $$
holds for  functions $ \varphi \in C^1_b(X; \R)$. Here we use standard notation: we fix any orthonormal basis $\{v_k:\;k\in \N\}$ of $H$ contained in $Q(X^*)$, $D_k \varphi$ denotes the  derivative of $\varphi $ along $v_k$, and $\hat{v}_k= Q^{-1}v_k \in X^*$. The integration by parts formula holds also for Sobolev functions, provided $\varphi   D_kG$ in the surface integral is meant in the sense of traces. Indeed, traces of Sobolev functions on the level hypersurfaces $G^{-1}( r)$ are readily defined through this approach. 

At the end of the paper we show that, under suitable assumptions,  the measures constructed here coincide with  weighted Hausdorff--Gauss surface measures, namely  for every $r\in \R$ and for every Borel set $B\subset X$ we have
$$\sigma_r^G (B) = \int_{B\cap G^{-1}( r)}\frac{1 }{|D_HG|_H}\,d\rho, $$
where $\rho$ is the above mentioned measure of \cite{FP},  $D_HG$ is the generalized gradient of $G$ along $H$, and $|\cdot |_H$ is the $H$-norm, see Sect. \ref{Notation}. This formula clarifies the dependence of $\sigma_r^G$ on $G$. 
Moreover, more refined properties of surface integrals and traces of Sobolev functions are consequences of the results of \cite{Tracce}. Also, the examples contained in \cite{Tracce} serve as examples here.    

\section{Notation and preliminaries}
\label{Notation}

We denote by $X^*$ the dual space of $X$, and by $Q:X^*\mapsto X$ the covariance of $\mu$. The Cameron--Martin space is denoted by $H$, its scalar product by $\langle \cdot , \cdot \rangle_H$ and its norm by $|\cdot |_H$. The closed ball in $H$ centered at $h_0$ with radius $r$ is denoted by $B_H(h_0, r)$. 

We fix once and for all an orthonormal basis ${\mathcal V} =\{v_k:\;k\in \N\}$ of $H$, contained in $Q(X^*)$. For every $k\in \N$ we set $\hat{v}_k = Q^{-1}(v_k)$. 

We recall that if $X$ is a Hilbert space and $X^*$ is canonically identified with $X$, then $Q$ is a compact self--adjoint operator with finite trace, and we can choose a basis $\{e_k:\;k\in \N\}$ of $X$  consisting of  eigenvectors of $Q$, $Qe_k= \lambda_ke_k$.  The space $H$ is just $Q^{1/2}(X)$ with the scalar product $\langle h_1, h_2\rangle_H= \langle Q^{-1/2}h_1,  Q^{-1/2}h_2\rangle_X$,  the set $\{v_k:= \sqrt{\lambda_k}e_k:\;k\in \N\}$ is an  orthonormal basis of $H$ and we have $\hat{v}_k(x) = \langle x, v_k\rangle_X/\lambda_k$ for each $k\in \N$ and $x\in X$. 
  
Let us recall the definition of Gaussian Sobolev spaces $W^{k,p}(X, \mu)$ for $k=1,2$, $p\geq 1$.

We say that a function $f:X\mapsto \R$ is $H$-differentiable at $x$ if there is $v\in H$ such that $f(x+h)-f(x) = \langle v, h\rangle_H + o(|h|_H)$, for every $h\in H$. In this case $v$ is unique, and we set $D_Hf(x) := v$. Moreover for every $k\in \N$ the directional derivative
$D_kf(x) : = \lim_{t\to 0}(f(x+tv_k) - f(x))/t$  exists and coincides with $\langle D_Hf(x), v_k\rangle_H$. 
It is easy to see that if $f $ is Fr\'echet differentiable at $x$ (as a function from $X$ to $\R$), then it is $H$-differentiable. In particular, the smooth cylindrical functions, namely the functions of the type $f(x) = \varphi(l_1(x), \ldots, l_n(x))$, for some $ \varphi\in C^{\infty}_b(\R^n)$, $l_1, \ldots, l_n\in X^*$, $n\in \N$, are $H$-differentiable at each $x$. 
 
$W^{1,p}(X, \mu)$ and $W^{2,p}(X, \mu)$ are the completions of the smooth cylindrical functions  in the norms 
$$\begin{array}{l}
\ds \|f\|_{W^{1,p}(X, \mu)} := \|f\|_{L^p(X, \mu)} + \bigg( \int_X \bigg(\sum_{k=1}^{\infty}  (D_{k}f(x))^2\bigg)^{p/2}\mu(dx)\bigg)^{1/p} 
\\
\\
\ds =  \|f\|_{L^p(X, \mu)} + \bigg( \int_X |D_Hf(x)|_H^{p}\mu(dx)\bigg)^{1/p}, \end{array}$$
$$\|f\|_{W^{2,p}(X, \mu)} := \|f\|_{W^{1,p}(X, \mu)} + \bigg( \int_X \bigg(\sum_{h, k=1}^{\infty}   (D_{hk}f(x))^2\bigg)^{p/2}\mu(dx)\bigg)^{1/p}. $$
Such spaces are in fact identified with subspaces of $L^p(X, \mu)$, since if $(f_n)$ and $(g_n)$ are Cauchy sequences in the norm of $W^{1,p}(X, \mu)$ (respectively, in the norm of $W^{2,p}(X, \mu)$), and converge to $f$ in $L^p(X, \mu)$, then the sequences $(D_Hf_n)$, $(D_hg_n)$ (respectively,  $(D^2_Hf_n)$, $(D^2_Hg_n)$) have equal limits in $L^p(X, \mu;H)$ (respectively, in $L^p(X, \mu;{\mathcal H}_2)$, where ${\mathcal H}_2$ is the set of all Hilbert-Schmidt bilinear forms in $H$). In other words, the operators $D_H$ and $D^2_H$, defined in the set of the smooth cylindrical functions with values in $L^p(X, \mu; H)$ and in $L^p(X, \mu;{\mathcal H}_2)$, are closable in $L^p(X, \mu)$. We still denote by $D_H$ and $D^2_H$ their closures, that are called $H$-gradient and $H$-Hessian.  

The spaces $W^{1,p}(X, \mu; H)$ are defined similarly, using $H$-valued,  instead of real valued,  cylindrical functions. The latter are the elements of  the linear span of functions such as $\psi(x) = \varphi(x)h$, with any smooth cylindrical $\varphi:X\mapsto \R$ and $h\in H$. 

The Gaussian divergence $div_{\mu}$ is defined in $W^{1,p}(X, \mu; H)$ by
$$div_{\mu} \Psi (x) = \sum_{k=1}^{\infty} (D_k \psi_k - \hat{v}_k\psi_k),  $$
where $\psi_k(x) = \langle \Psi(x),  v_k\rangle_H$, and the series converges in $L^{p}(X, \mu)$. See \cite[Prop. 5.8.8]{Boga}. 
It coincides with  (minus) the formal adjoint  of the $H$-gradient, since  we have the integration by parts formula
$$\int_X \langle D_H\varphi, \Psi \rangle_H\,d\mu = -\int_{X} \varphi \,div_{\mu} \Psi\,d\mu, \quad \varphi\in W^{1,p'}(X, \mu), \; \Psi\in W^{1,p}(X, \mu; H), $$
with $p'= p/(p-1)$.  
Taking in particular $\Psi (x)= v_k$ (constant) for any $k\in \N$, we obtain $div_{\mu} \Psi = Q^{-1}v_k  = \hat{v}_k $, and the integration formula reads as 
\begin{equation}
\label{partiX} 
\int_{X} D_k\varphi \,d\mu =   \int_{X}\hat{v}_k\varphi \,d\mu , \quad k\in \N . 
\end{equation}
We refer to \cite[Ch.5]{Boga} for equivalent definitions and properties.

The surfaces taken into consideration are level sets $  \{x\in X:\;G(x) = r\}$ of a sufficiently regular function $G$.  Namely, our $G:X\mapsto \R$ is a $C_p$-quasicontinuous function that satisfies
\begin{equation}
\label{G}
\frac{D_HG}{|D_HG|_H^2} \in W^{1,p}(X, \mu; H). 
\end{equation}
Let us recall that a function $G:X\mapsto \R$ is $C_p$-quasicontinuous if for every $\varepsilon >0$ there exists  an open set $A_{\varepsilon}$ with Gaussian capacity $C_p(A_{\eps}) <\varepsilon$, such that $G$ is continuous at any $x\notin A_{\varepsilon}$. The Gaussian capacity of an open set $A\subset X$ is defined as 
$$C_p(A):= \inf\{ \| g\|_{W^{1,p}(X, \mu)} : \; g\geq 1 \;\mu-a.e.\;{\rm on}\;A, \, g\in W^{1,p}(X, \mu)\}.$$
We recall that every element of $W^{1,p}(X, \mu)$ has a $C_p$-quasicontinuous version (\cite[Thm. 5.9.6]{Boga}). 

In addition to the Sobolev spaces, we shall consider the space $BUC(X; \R)$ of the uniformly continuous and bounded functions from $X$ to $\R$, endowed with the sup norm $\|\cdot\|_{\infty}$, and the space $C^1_b(X; \R)$ of the bounded continuously Fr\'echet differentiable functions with bounded derivative.

For any Borel function $f:X\mapsto \R$ we denote by $\mu\circ f^{-1}$ the image measure on the Borel sets of $\R$ defined by $(\mu\circ f^{-1})(I) = \mu(f^{-1}(I))$. More generally, if $\varphi:X\mapsto \R$ is another Borel function in $L^1(X, \mu)$, we define the signed measure $(\varphi \mu\circ f^{-1})(I) := \int_{f^{-1}(I)}\varphi \,d\mu $ on the Borel sets $I$ of $\R$.

 
\section{Construction of surface measures}
\label{Construction}

Throughout the paper, $G:X\mapsto \R$ is a fixed  version of an element of $W^{1,p}(X, \mu)$ (still denoted by $G$), that satisfies \eqref{G}. As pointed out in \cite{Nualart}, for $p=2$ 
an easy sufficient condition for $G$ to satisfy \eqref{G} is
$$G\in W^{2,4}(X, \mu), \quad \frac{1}{|D_HG|_H}\in L^{8}(X, \mu). $$
which is generalized to 
$$G\in W^{2,s}(X, \mu), \quad \frac{1}{|D_HG|_H}\in L^{q}(X, \mu), \quad \frac{1}{s} + \frac{2}{q} \leq \frac{1}{p}$$
if $p $ is any number $>1$. 

In the following, we shall make some further summability assumptions on the derivatives of $G$. All of them are satisfied if $G$ fulfills next condition \eqref{FP}.


\subsection{Continuity of densities}

The starting point is the following well known lemma. See \cite[Proposition 2.1.1]{Nualart}. 

\begin{Lemma}
\label{Nualart}
 $ \mu \circ G^{-1}$   is absolutely continuous with respect to  the Lebesgue measure $dr$ in $\R$. Moreover the density $q_1:=\frac{d(\mu \circ G^{-1})}{dr}$  is  given by
\begin{equation}
\label{q1}
q_1( r)=\int_{G^{-1}(-\infty, r)} div_{\mu}\bigg( \frac{D_HG}{|D_HG|_H^2} \bigg)\,d\mu,\quad r \in \R, 
\end{equation}
and it is continuous and bounded.  
\end{Lemma}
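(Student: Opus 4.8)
The plan is to prove the absolute continuity and the formula \eqref{q1} simultaneously by testing the measure $\mu\circ G^{-1}$ against smooth functions on $\R$ and using the integration by parts formula for the Gaussian divergence. Concretely, fix $\psi\in C^1_b(\R)$ and write $\Psi_{\psi}(x):=\psi(G(x))\,D_HG(x)/|D_HG(x)|_H^2$. Since $\psi\circ G$ is bounded and (formally) $H$-differentiable with $D_H(\psi\circ G)=\psi'(G)\,D_HG$, and since $D_HG/|D_HG|_H^2\in W^{1,p}(X,\mu;H)$ by \eqref{G}, one checks that $\Psi_{\psi}\in W^{1,p}(X,\mu;H)$ with
\[
\operatorname{div}_{\mu}\Psi_{\psi}(x)=\psi'(G(x))+\psi(G(x))\,\operatorname{div}_{\mu}\!\bigg(\frac{D_HG}{|D_HG|_H^2}\bigg)(x).
\]
Integrating the left-hand side against $\mu$ gives $0$, because $\operatorname{div}_\mu$ is (minus) the adjoint of the $H$-gradient applied to the constant function $\varphi\equiv 1$: $\int_X \operatorname{div}_\mu\Psi_\psi\,d\mu=-\int_X\langle D_H 1,\Psi_\psi\rangle_H\,d\mu=0$. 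Hence
\[
\int_X \psi'(G(x))\,\mu(dx) = -\int_X \psi(G(x))\,\operatorname{div}_{\mu}\!\bigg(\frac{D_HG}{|D_HG|_H^2}\bigg)(x)\,\mu(dx).
\]

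Next I would rewrite both sides as integrals over $\R$. The left-hand side is $\int_{\R}\psi'(r)\,(\mu\circ G^{-1})(dr)$. For the right-hand side, set $\nu:=\operatorname{div}_\mu(D_HG/|D_HG|_H^2)\,\mu\circ G^{-1}$, the signed measure on $\R$ defined in Section \ref{Notation}; then the right-hand side equals $-\int_{\R}\psi(r)\,\nu(dr)$. Introducing the function $q_1(r):=\nu((-\infty,r))=\int_{G^{-1}(-\infty,r)}\operatorname{div}_\mu(D_HG/|D_HG|_H^2)\,d\mu$, which is of bounded variation on $\R$ with total variation at most $\|\operatorname{div}_\mu(D_HG/|D_HG|_H^2)\|_{L^1(X,\mu)}<\infty$, an integration by parts in one dimension (Fubini, writing $\psi(r)=-\int_r^{\infty}\psi'(s)\,ds$) converts $-\int_{\R}\psi\,d\nu$ into $\int_{\R}\psi'(r)\,q_1(r)\,dr$. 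Comparing, $\int_{\R}\psi'(r)\,(\mu\circ G^{-1})(dr)=\int_{\R}\psi'(r)\,q_1(r)\,dr$ for all $\psi\in C^1_b(\R)$; since derivatives of such $\psi$ are dense enough to separate measures (e.g. they include all $C^\infty_c$ functions), this forces $\mu\circ G^{-1}=q_1\,dr$, giving both absolute continuity and formula \eqref{q1}, with $q_1\in L^1(\R,dr)$ and $0\le q_1$ after adjusting the constant of integration (indeed $q_1(-\infty)=0$).

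Finally, continuity and boundedness of $q_1$: boundedness is immediate from $|q_1(r)|\le\|\operatorname{div}_\mu(D_HG/|D_HG|_H^2)\|_{L^1(X,\mu)}$ for every $r$, since $q_1(r)$ is the integral of a fixed $L^1$ function over the set $G^{-1}(-\infty,r)$. Continuity follows from dominated convergence: as $r_n\to r$, the sets $G^{-1}(-\infty,r_n)$ converge (in the a.e. sense of characteristic functions) to either $G^{-1}(-\infty,r)$ or $G^{-1}(-\infty,r]$, and the two differ by $G^{-1}(\{r\})=(\psi\circ G)$-null exactly when $\mu(G^{-1}(\{r\}))=0$; but this holds for every $r$ precisely because $\mu\circ G^{-1}=q_1\,dr$ has no atoms. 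Hence $r\mapsto q_1(r)$ is continuous on all of $\R$.

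The main obstacle I expect is the justification that $\Psi_\psi=\psi(G)\,D_HG/|D_HG|_H^2\in W^{1,p}(X,\mu;H)$ with the product/chain rule for its divergence: $\psi\circ G$ is bounded but only Sobolev (not smooth), so one must approximate — using smooth cylindrical approximations of $G$ and of the vector field $D_HG/|D_HG|_H^2$, with uniform control via \eqref{G} — and pass to the limit in the Leibniz formula $\operatorname{div}_\mu(g\Psi)=\langle D_Hg,\Psi\rangle_H+g\operatorname{div}_\mu\Psi$. Everything else (the one-dimensional integration by parts, the density argument, dominated convergence for continuity) is routine once this Leibniz identity is in hand; since the lemma is quoted from \cite[Proposition 2.1.1]{Nualart} one may alternatively simply cite it, but the argument above is the natural self-contained route.
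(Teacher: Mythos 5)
Your proposal is correct and is essentially the argument the paper itself relies on: the paper does not prove Lemma \ref{Nualart} directly (it cites \cite[Prop. 2.1.1]{Nualart}), but its proof of the generalization Lemma \ref{Nualartvarphi} follows exactly this scheme --- compose $G$ with a Lipschitz primitive of a test function on $\R$, use the chain rule $D_H(h\circ G)=(h'\circ G)D_HG$ together with the Leibniz rule and the zero-mean property of $div_{\mu}$, then Fubini --- and your version with $\psi\in C^1_b(\R)$ is just the dual formulation of their choice $h(r)=\int_{-\infty}^r\one_{[\alpha,\beta]}(s)\,ds$. The only slip is the identity $\psi(r)=-\int_r^{\infty}\psi'(s)\,ds$, which fails for general $\psi\in C^1_b(\R)$; take instead $\psi$ to be a primitive of a $C^{\infty}_c$ function and note that the additive constant contributes nothing because $\int_X div_{\mu}\big(D_HG/|D_HG|_H^2\big)\,d\mu=0$.
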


 Lemma \ref{Nualart} implies that every level surface $G^{-1}( r)$ is negligible, and that for every $\varphi\in L^1(X, \mu)$ the 
 signed measure $ \varphi\mu \circ G^{-1}$   is absolutely continuous with respect to the Lebesgue measure. In the following we shall need some properties of the density $q_{\varphi}$ of $ \varphi\mu \circ G^{-1}$ when $\varphi$ belongs to a Sobolev space. They are provided by the following lemma, which is a generalization of Lemma \ref{Nualart}. 
 
\begin{Lemma}
\label{Nualartvarphi}
Let $\varphi \in W^{1,p'}(X, \mu)$.   Then   $ \varphi\mu \circ G^{-1}$   is absolutely continuous with respect to the Lebesgue measure, and  the  corresponding density $q_\varphi $  is    given by
\begin{equation}
\label{e4m}
q_\varphi( r)=\int_{G^{-1}(-\infty, r)}\bigg( \varphi \,div_{\mu}\bigg( \frac{D_HG}{|D_HG|_H^2} \bigg)+\langle  D_H\varphi,\frac{D_HG}{|D_HG|_H^2} \rangle_H\bigg)\,d\mu,
\end{equation}
and it is continuous and bounded. There is $C>0$ independent of $\varphi$ such that 
\begin{equation}
\label{e5m}
|q_\varphi( r)| \leq C\| \varphi \|_{W^{1,p'}(X, \mu)},\quad r\in \R . 
\end{equation}
 \end{Lemma}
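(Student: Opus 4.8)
The plan is to mimic the proof of Lemma \ref{Nualart} but carry the test function $\varphi$ through the computation. First I would reduce to showing that for every $\psi\in C^\infty_b(\R)$ (or even $\psi\in C^1_c(\R)$) one has the identity
\begin{equation}
\label{plansketch}
\int_X \psi(G(x))\,\varphi(x)\,\mu(dx) = \int_{\R}\psi(r)\,q_\varphi(r)\,dr,
\end{equation}
with $q_\varphi$ defined by the right-hand side of \eqref{e4m}; this identity says precisely that $\varphi\mu\circ G^{-1}$ has density $q_\varphi$. To produce \eqref{plansketch} I would start from the left-hand side, write $\psi(G(x)) = \int_{-\infty}^{G(x)}\psi'(s)\,ds$ so that $\psi(G(x)) = \int_{\R}\psi'(s)\,\one_{G^{-1}(-\infty,s)}(x)\,ds$, substitute, and use Fubini (justified by boundedness of $\psi'$ and integrability of $\varphi$) to get $\int_{\R}\psi'(s)\big(\int_{G^{-1}(-\infty,s)}\varphi\,d\mu\big)\,ds$. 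Then an integration by parts in $s$ transfers $\psi'$ to $\psi$ provided one knows that $s\mapsto \int_{G^{-1}(-\infty,s)}\varphi\,d\mu$ is differentiable with derivative equal to the bracketed expression in \eqref{e4m}; equivalently one can avoid differentiating and instead directly establish \eqref{plansketch} by recognizing the right-hand side of \eqref{e4m}, integrated against $\psi'$, as something that simplifies.

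The cleaner route, which I would actually pursue, is the divergence-theorem computation underlying Lemma \ref{Nualart}. Apply the Gaussian integration by parts formula $\int_X \langle D_H\Phi,\Psi\rangle_H\,d\mu = -\int_X \Phi\,div_\mu\Psi\,d\mu$ with the vector field $\Psi = D_HG/|D_HG|_H^2 \in W^{1,p}(X,\mu;H)$ (this is exactly hypothesis \eqref{G}) and with $\Phi(x) = \Theta(G(x))\varphi(x)$, where $\Theta(r) = \int_{-\infty}^r \psi(t)\,dt$ for a suitable bump $\psi$. Since $\langle D_H(\Theta\circ G),\Psi\rangle_H = \Theta'(G)\langle D_HG, D_HG/|D_HG|_H^2\rangle_H = \psi(G)$, the chain and product rules give $\langle D_H\Phi,\Psi\rangle_H = \psi(G)\varphi + \Theta(G)\langle D_H\varphi,\Psi\rangle_H$. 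Feeding this into the integration by parts formula yields
\begin{equation}
\label{planIBP}
\int_X \psi(G)\,\varphi\,d\mu = -\int_X \Theta(G)\Big(\varphi\,div_\mu\Psi + \langle D_H\varphi,\Psi\rangle_H\Big)\,d\mu,
\end{equation}
and then expanding $\Theta(G(x)) = \int_{\R}\psi(r)\one_{\{G(x)>r\}}\,dr$ (for $\psi\ge 0$, say) and using Fubini turns the right-hand side into $\int_{\R}\psi(r)\,q_\varphi(r)\,dr$ with $q_\varphi$ as in \eqref{e4m}. Since $\psi$ ranges over a rich enough class, this proves absolute continuity and the formula \eqref{e4m}. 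Continuity and boundedness of $q_\varphi$, together with the bound \eqref{e5m}, then follow from \eqref{e4m} itself: the map $r\mapsto \int_{G^{-1}(-\infty,r)}(\cdots)\,d\mu$ is continuous because $\mu\circ G^{-1}$ has no atoms (Lemma \ref{Nualart}) and the integrand $\varphi\,div_\mu\Psi + \langle D_H\varphi,\Psi\rangle_H$ is in $L^1(X,\mu)$, and $|q_\varphi(r)|\le \|\varphi\,div_\mu\Psi + \langle D_H\varphi,\Psi\rangle_H\|_{L^1}\le \big(\|div_\mu\Psi\|_{L^p} + \|\Psi\|_{W^{1,p}(X,\mu;H)}\big)\|\varphi\|_{W^{1,p'}}$ by H\"older's inequality, which gives \eqref{e5m} with $C$ depending only on $G$.

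The main obstacle is the justification that all the manipulations are legitimate for $\varphi$ merely in $W^{1,p'}(X,\mu)$ rather than a smooth cylindrical function. The product rule $D_H(\Theta(G)\varphi) = \Theta'(G)\varphi\,D_HG + \Theta(G)D_H\varphi$ and the membership $\Theta(G)\varphi \in W^{1,p'}(X,\mu)$ need care: $\Theta$ is bounded Lipschitz (if $\psi$ is bounded with compact support), so $\Theta(G)$ is a bounded Lipschitz-along-$H$ function and the chain rule for Sobolev functions composed with Lipschitz maps applies, but one should cite the relevant result (e.g. in \cite[Ch.5]{Boga}) and check that the product of the bounded Sobolev function $\Theta(G)$ with $\varphi\in W^{1,p'}$ lands in $W^{1,p'}$ with the expected gradient — this uses that $D_H(\Theta(G)) = \psi(G)D_HG$ and that $\psi(G)D_HG$ is bounded in $H$-norm times $|D_HG|_H$, which is controlled since $D_HG\in L^{p}(X,\mu;H)$ at least on the support of $\psi\circ G$... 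Actually the simplest fix is to first prove \eqref{e4m}–\eqref{e5m} for smooth cylindrical $\varphi$, where everything is classical, obtaining the linear bound \eqref{e5m}, and then extend to general $\varphi\in W^{1,p'}$ by density: both sides of \eqref{plansketch} are continuous in $\varphi$ with respect to the $W^{1,p'}$-norm (the left side because $|\psi|\le\|\psi\|_\infty$ and $\varphi\mapsto\int\psi(G)\varphi\,d\mu$ is $L^1$-continuous hence $W^{1,p'}$-continuous; the right side by \eqref{e5m}), so the formula and the estimate pass to the limit, and continuity of $q_\varphi$ is inherited as a locally uniform limit of continuous functions. This density argument is the cleanest way to dispatch the regularity issue, and I would organize the proof around it.
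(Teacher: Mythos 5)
Your proposal is correct and follows essentially the same route as the paper: the paper takes $\psi=\one_{[\alpha,\beta]}$ with primitive $h$, applies the chain rule $D_H(h\circ G)=(f\circ G)D_HG$, integrates by parts against the vector field $D_HG/|D_HG|_H^2$, uses Fubini, and then extends from smooth cylindrical $\varphi$ to $W^{1,p'}$ by density, exactly as you do with your smooth $\psi$ and $\Theta$. The only step you glossed over is the sign conversion from $-\int_{G^{-1}(r,+\infty)}(\varphi\,div_\mu\Psi+\langle D_H\varphi,\Psi\rangle_H)\,d\mu$ to $+\int_{G^{-1}(-\infty,r)}(\cdots)\,d\mu$, which the paper justifies by the zero mean value of $div_\mu(\varphi\Psi)$.
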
 
 \begin{proof}
Fix  $ \alpha <\beta\in \R$ and set
 $$ f( r):=\one_{[\alpha,\beta]}( r),\quad h( r)=\int_{-\infty}^r f(s)ds =\left\{ \begin{array}{ll} 0 & {\rm if}\;\;r\leq \alpha, \\
 r-\alpha & {\rm if}\; \;\alpha \leq r \leq \beta, \\
 \beta-\alpha & {\rm if}\; \;r\geq \beta
 \end{array}\right. $$
 Since $h$ is Lipschitz continuous, then $h\circ G\in W^{1,p}(X,\mu)$  and
 $$
D_H (h\circ G)=(f\circ G) D_HG. 
 $$
 Therefore
 $$
 f\circ G=\one_{[\alpha,\beta]}\circ G =\frac{\langle D_H(h\circ G),D_HG \rangle_H}{|D_HG|_H^2}
 $$
 Assume that $\varphi \in C^1_b(X)$. Multiplying by $\varphi$ and integrating both sides  yields
 $$
  \begin{array}{lll}
\ds \int_{G^{-1}(\alpha, \beta)}  \varphi \,d\mu &=&\ds\int_X   \varphi\,\frac{\langle D_H(h\circ G),D_HG \rangle_H}{|D_HG|_H^2}\,d\mu\\
 \\
 &=&- \ds\int_X (h\circ G)\,div_{\mu}\bigg( \varphi\, \frac{\langle D_H(h\circ G),D_HG \rangle_H}{|D_HG|_H^2}\bigg)\,d\mu.
  \end{array}
 $$
 Set as before $\psi=D_HG\,|D_HG|_H^{-2}$. Then,    $div_{\mu}(\varphi\,\psi)=\varphi\,div_{\mu}(\psi)+\langle D_H\varphi,\psi\rangle_H$. 
 Therefore, 
 $$\begin{array}{lll}
\ds \int_{G^{-1}(\alpha, \beta)}  \varphi \,d\mu & = & -\ds \int_X (h\circ G)\,div_{\mu}( \varphi\,\psi)\,d\mu = -\int_X \bigg(\int_{\R}\one_{G^{-1}(r, +\infty)}f( r)dr\bigg)\,div_{\mu}( \varphi\,\psi)(x)\,d\mu 
 \\
 \\
 & = &  \ds -\int_{\R} f( r)\bigg(\int_X \one_{G^{-1}(r, +\infty)} div_{\mu}(\varphi\,\psi)d\mu\bigg)dr = -\int_{\alpha}^{\beta} dr \int_X \one_{G^{-1}(r, +\infty)} div_{\mu}(\varphi\,\psi)d\mu
 \\
 \\
 & = &  \ds \int_{\alpha}^{\beta} dr \int_X \one_{G^{-1}(-\infty, r)} (\varphi\,div_{\mu}\psi+\langle D_H\varphi,\psi\rangle_H)d\mu  
\end{array} $$
(in the last equality we used the fact that the divergence of any vector field in $W^{1,p}(X, \mu; H)$ has zero mean value).  Now, let $\varphi \in W^{1,p'}(X, \mu)$ and approach it by a sequence of smooth cylindrical  functions $\varphi_n$. After applying the above formula to each $\varphi_n$, we may let $n\to \infty$ in both sides, since $\varphi_n\to \varphi$ and  $\varphi_n\,div_{\mu}\psi+\langle D_H\varphi_n,\psi\rangle_H \to \varphi\,div_{\mu}\psi +\langle D_H\varphi,\psi\rangle_H$ in $L^1(X, \mu)$. Therefore we get 
$$ (\varphi\mu \circ G^{-1})((\alpha, \beta)) =   \int_{\alpha}^{\beta} dr \int_X \one_{G^{-1}(-\infty, r)} (\varphi\,div_{\mu}\psi+\langle D_H\varphi,\psi\rangle_H)d\mu ,$$
namely  $ \varphi\mu \circ G^{-1}$  has  density $q_\varphi$ given by
$$q_\varphi ( r)=\int_{G^{-1}(-\infty, r)}(\varphi\,div_{\mu}\psi+\langle D_H\varphi,\psi\rangle_H)\,\mu(dx),$$
which is continuous and bounded, since $\varphi\,div_{\mu}\psi+\langle D_H\varphi,\psi\rangle_H\in L^1(X, \mu)$ and $\mu(G^{-1}(r_0))=0$ for every $r_0\in \R$. Estimate \eqref{e5m} follows just applying the H\"older inequality. 
 \end{proof}


\subsection{Smoothness of densities}
\label{smoothness}


This \S \hspace{1mm} is devoted to show that for every uniformly continuous and bounded $\varphi : X\mapsto \R$, the function
\begin{equation}
\label{e1e}
F_{\varphi}(r ): = \int_{G^{-1}(-\infty, r)} \varphi \,d\mu
\end{equation}
is continuously differentiable.

A useful tool will be the following disintegration formula, whose proof is given for the reader's convenience in the appendix.

\begin{Theorem}
\label{tA.1}
Let  $X$ be  a Polish space endowed with a Borel probability measure $\mu$. Let $\Gamma :X \to \R$ be a Borel function, and set $\lambda = \mu\circ \Gamma ^{-1}$. 
Then there exists a family of  Borel probability measures $\{ m_s:\;s\in \R\}$ on $X$ such that
\begin{equation}
\label{eA.1}
\int_X \varphi(x)\mu(dx)=\int_\R \left(\int_{X} \varphi(x)m_s(dx)   \right)\lambda(ds),
\end{equation}
for all $\varphi: X\to \R$ bounded and Borel measurable.

Moreover the support of $m_s$
is contained  in $\Gamma ^{-1}(s)$ for $\lambda$-almost all  $s\in \R$.
\end{Theorem}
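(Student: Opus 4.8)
The plan is to reduce the statement, via the graph of $\Gamma$, to a hand‑made disintegration along a coordinate projection, which can be constructed by means of the Riesz--Markov representation theorem; the single genuinely non‑routine point, the support property, I would encode as the value of a ``master identity'' on one fixed Borel set. First, since $X$ is Polish it is homeomorphic to a Borel (indeed $G_\delta$) subset of the Hilbert cube $K:=[0,1]^{\N}$; identifying $X$ with that subset, I would extend $\mu$ to a Borel probability measure on $K$ by setting $\mu(K\setminus X)=0$ and extend $\Gamma$ to a Borel function on all of $K$ arbitrarily. Then $\lambda=\mu\circ\Gamma^{-1}$ is unchanged, and it is enough to produce Borel probability measures $m_s$ on $K$, with $s\mapsto m_s(A)$ Borel for every Borel $A$, such that \eqref{eA.1} holds for bounded Borel $\varphi:K\to\R$ and such that $m_s(K\setminus X)=0$ and $m_s(\Gamma^{-1}(s))=1$ for $\lambda$-a.e.\ $s$; restricting everything to $X$ then gives the theorem, with $m_s$ concentrated on $\Gamma^{-1}(s)$.

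\emph{Construction of the $m_s$.} For $f\in C(K)$ the finite Borel measure $B\mapsto\int_{\Gamma^{-1}(B)}f\,d\mu$ on $\R$ is dominated by $\|f\|_\infty\lambda$, hence has a Radon--Nikodym density $L_f$ with $|L_f|\le\|f\|_\infty$ $\lambda$-a.e.; the map $f\mapsto L_f$ is linear, sends $1$ to $1$, and sends nonnegative functions to $\lambda$-a.e.\ nonnegative ones. Fix a countable $\Q$-linear subspace $\mathcal D\ni 1$ of $C(K)$ dense in the sup norm, and Borel versions of $L_f$ for $f\in\mathcal D$. Then, outside a single $\lambda$-null set, $f\mapsto L_f(s)$ is a positive, normalised $\Q$-linear functional on $\mathcal D$, hence extends to a state on $C(K)$, and by Riesz--Markov $L_f(s)=\int_K f\,dm_s$ for a unique Borel probability measure $m_s$ on $K$ (on the exceptional null set set $m_s$ equal to a fixed Dirac mass). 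Since $\int_K f\,dm_s=\lim_n L_{f_n}(s)$ along $\mathcal D\ni f_n\to f$, the map $s\mapsto\int_K f\,dm_s$ is Borel for every $f\in C(K)$, and a monotone class argument extends this to all bounded Borel integrands, so in particular $s\mapsto m_s(A)$ is Borel.

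\emph{The master identity.} The core of the proof is the identity

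$$\int_\R\bigg(\int_K F(s,x)\,m_s(dx)\bigg)\lambda(ds)=\int_K F\big(\Gamma(x),x\big)\,\mu(dx),$$

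valid for every bounded Borel $F:\R\times K\to\R$. For a product $F(s,x)=\psi(s)f(x)$ with $\psi$ bounded Borel and $f\in C(K)$ both sides equal $\int_\R\psi(s)L_f(s)\lambda(ds)$: the left side by construction of $m_s$, and the right side because $L_f$ is the density of $f\mu\circ\Gamma^{-1}$ (first check it for $\psi$ an indicator, then extend by linearity and monotone convergence). From this, routine functional monotone class arguments in the pair $(s,x)$ give the identity first for products $\one_B(s)\,\one_A(x)$ with $A,B$ Borel, then for all bounded Borel $F$.

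\emph{Conclusion.} Taking $F(s,x)=\varphi(x)$ gives \eqref{eA.1} (recall $\mu(K\setminus X)=0$, and, for the $X$-statement, $m_s(K\setminus X)=0$ a.e.). Taking $F(s,x)=\one_{K\setminus X}(x)$ gives $\int_\R m_s(K\setminus X)\lambda(ds)=0$, so $m_s$ is carried by $X$ for $\lambda$-a.e.\ $s$. Finally, the graph $\mathcal G:=\{(\Gamma(x),x):x\in K\}$ is a Borel subset of $\R\times K$ (e.g.\ it is the zero set of the Borel map $(s,x)\mapsto s-\Gamma(x)$, or one invokes that graphs of Borel maps into a separable metric space are Borel), and $F:=\one_{(\R\times K)\setminus\mathcal G}$ satisfies $F(\Gamma(x),x)\equiv 0$; hence the master identity yields $\int_\R m_s(\{x\in K:\Gamma(x)\ne s\})\lambda(ds)=0$, so $m_s(\Gamma^{-1}(s))=1$ for $\lambda$-a.e.\ $s$. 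I expect this last step to be the only real obstacle: arguing it directly is awkward, because the set $\{x:\Gamma(x)\ne s\}$ varies with $s$ and cannot be fed into a $\mu$-integral, whereas reading it off from the value of the master identity on the single fixed Borel set $(\R\times K)\setminus\mathcal G$ makes it immediate; the remaining ingredients---the compact embedding, the Radon--Nikodym/Riesz construction, the monotone class upgrades---are routine.
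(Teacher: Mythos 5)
Your proof is correct, and it reaches the same underlying object as the paper --- the regular conditional probability of $\mu$ given $\Gamma$, built from the densities of the image measures $f\mu\circ\Gamma^{-1}$ with respect to $\lambda$ --- but it differs in two places. First, where the paper simply writes $\E[\one_A\mid\Gamma]=f_A\circ\Gamma$ and cites Dudley's theorem on regular conditional distributions to upgrade the family $\{f_A\}$ to genuine measures $m_r$, you reprove that existence result from scratch via the Hilbert-cube embedding, a countable $\Q$-linear dense subspace of $C(K)$, and Riesz--Markov; this makes the argument self-contained (and yields the Borel measurability of $s\mapsto m_s(A)$ explicitly) at the cost of some length, and all the individual steps (the single exceptional null set, positivity of the extended functional, the monotone class upgrades) are sound. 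Second, and more substantively, the support property is handled differently: the paper takes $A=\Gamma^{-1}(\R\setminus I)$ in its identity $\mu(A\cap\Gamma^{-1}(I))=\int_I m_r(A)\,\lambda(dr)$ for the countably many rational intervals $I=(a_n,b_n)$ and then shrinks such intervals to a point, which circumvents exactly the difficulty you identify (the $s$-dependence of $\{x:\Gamma(x)\neq s\}$) without ever leaving the space $X$; your alternative is to prove a two-variable ``master identity'' on $\R\times K$ and evaluate it on the complement of the graph of $\Gamma$. Your route is slicker once the master identity is available, but it requires the extra product-space measurability work, whereas the paper's interval argument needs only the one-variable identity it has already established; note also that what both arguments actually deliver is $m_s(\Gamma^{-1}(s))=1$ for $\lambda$-a.e.\ $s$, which is the correct reading of the (slightly loosely phrased) ``support'' claim in the statement when $\Gamma^{-1}(s)$ need not be closed. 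The only cosmetic point worth fixing is that on the exceptional null set you should place the Dirac masses at points of $X$ (not merely of $K$) so that the restricted $m_s$ are probability measures on $X$ for \emph{every} $s$, as the statement requires.
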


\begin{Proposition}
\label{C^1}
Let $\varphi\in BUC(X;\R)$. Then
$F_\varphi \in C^1_b(\R)$. 
\end{Proposition}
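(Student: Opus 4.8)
The plan is to approximate a general $\varphi \in BUC(X;\R)$ by nicer functions for which Lemma \ref{Nualartvarphi} already gives a continuous bounded density, and then pass to the limit uniformly. First I would reduce to the case $\varphi \in C^1_b(X;\R)$: given $\varphi \in BUC(X;\R)$, there is a sequence $\varphi_n \in C^1_b(X;\R)$ (e.g. obtained by convolution along finitely many directions of $H$, or via the Ornstein--Uhlenbeck semigroup applied to $\varphi$, which maps $BUC$ into $C^1_b$ and converges uniformly as $t \to 0^+$ on uniformly continuous functions) with $\|\varphi_n - \varphi\|_\infty \to 0$. Since $\varphi_n \in C^1_b(X;\R) \subset W^{1,p'}(X,\mu)$ for every $p' \geq 1$, Lemma \ref{Nualartvarphi} tells us that $F_{\varphi_n}(r) = \int_{G^{-1}(-\infty,r)} \varphi_n\, d\mu$ is $C^1$ with derivative $q_{\varphi_n}(r)$, continuous and bounded, given by formula \eqref{e4m}.

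Next I would show that $q_{\varphi_n}$ converges uniformly on $\R$. From \eqref{e4m},
$$
q_{\varphi_n}(r) - q_{\varphi_m}(r) = \int_{G^{-1}(-\infty,r)} \Big( (\varphi_n - \varphi_m)\, \mathrm{div}_\mu \psi + \langle D_H(\varphi_n - \varphi_m), \psi \rangle_H \Big)\, d\mu,
$$
where $\psi = D_HG/|D_HG|_H^2 \in W^{1,p}(X,\mu;H)$. The first term is bounded in absolute value by $\|\varphi_n - \varphi_m\|_\infty \, \|\mathrm{div}_\mu \psi\|_{L^1(X,\mu)} \to 0$, uniformly in $r$. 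The second term is the delicate one: $D_H(\varphi_n - \varphi_m)$ need not be small in $L^1(X,\mu;H)$ even though $\varphi_n - \varphi_m \to 0$ uniformly. To handle it I would integrate by parts back, rewriting $\int_{G^{-1}(-\infty,r)} \langle D_H\chi, \psi\rangle_H\, d\mu$ (with $\chi = \varphi_n - \varphi_m$) without derivatives of $\chi$. The idea is the computation already performed in the proof of Lemma \ref{Nualartvarphi}: using the Lipschitz function $h$ with $h' = \one_{(-\infty,r]}$ (suitably truncated, or $h(s) = (s \wedge r) $ shifted) one gets $\one_{G^{-1}(-\infty,r)} = \langle D_H(h\circ G), \psi\rangle_H$ up to the behaviour at $\pm\infty$; then
$$
\int_X \chi\, \langle D_H(h\circ G), \psi \rangle_H\, d\mu = -\int_X (h\circ G)\, \mathrm{div}_\mu(\chi \psi)\, d\mu = -\int_X (h\circ G)\big(\chi\, \mathrm{div}_\mu\psi + \langle D_H\chi, \psi\rangle_H\big)\, d\mu,
$$
so that $\int_{G^{-1}(-\infty,r)} \langle D_H\chi,\psi\rangle_H\,d\mu = -\int_X (h\circ G)\,\mathrm{div}_\mu(\chi\psi)\,d\mu + (\text{a }\mathrm{div}_\mu\psi\text{ term})$. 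Every surviving term is then estimated by $\|\chi\|_\infty$ times $L^1$-norms of $\mathrm{div}_\mu\psi$, uniformly in $r$, so $(q_{\varphi_n})$ is Cauchy in $C_b(\R)$.

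Finally, since $\|\varphi_n - \varphi\|_\infty \to 0$ we have $F_{\varphi_n} \to F_\varphi$ pointwise (indeed uniformly, as $|F_{\varphi_n}(r) - F_\varphi(r)| \leq \|\varphi_n - \varphi\|_\infty$), and $q_{\varphi_n} \to g$ uniformly for some $g \in C_b(\R)$; standard calculus then gives $F_\varphi \in C^1(\R)$ with $F_\varphi' = g$. Boundedness of $F_\varphi$ is immediate since $|F_\varphi(r)| \leq \|\varphi\|_\infty$, and boundedness of $F_\varphi'$ follows from the uniform bound on $q_{\varphi_n}$. I expect the main obstacle to be exactly the term $\langle D_H(\varphi_n - \varphi_m), \psi\rangle_H$: controlling it requires the integration-by-parts trick above to trade the unbounded $D_H\chi$ for a sup-norm estimate on $\chi$, using crucially that $\psi \in W^{1,p}(X,\mu;H)$ (hypothesis \eqref{G}) so that $\mathrm{div}_\mu\psi \in L^1(X,\mu)$ with zero mean, together with care about the $r$-dependence of the auxiliary Lipschitz function $h$ to make all bounds uniform in $r$. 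One should also verify the identification of the density of $\varphi\mu\circ G^{-1}$ with $q_\varphi$ as given by \eqref{e4m} extends to $\varphi \in BUC(X;\R)$, which comes for free from the $L^1$-convergence $\varphi_n \to \varphi$ and the limit computation just described.
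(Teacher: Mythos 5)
Your outer strategy (approximate $\varphi$ uniformly by functions in $W^{1,p'}(X,\mu)$, show that the densities $q_{\varphi_n}$ form a Cauchy sequence in the sup norm, and pass to the limit in $C^1_b(\R)$) is exactly the paper's, and your last paragraph correctly identifies the crux: one needs a bound of the form $|q_{\chi}(r)|\leq C\|\chi\|_\infty$, uniform in $r$, for $\chi=\varphi_n-\varphi_m$. But the mechanism you propose for this bound does not work. Your displayed identity, after using $\langle D_H(h\circ G),\psi\rangle_H=h'\circ G$, reads
$$\int_X \chi\,(h'\circ G)\,d\mu \;=\; -\int_X (h\circ G)\,\big(\chi\,div_{\mu}\psi+\langle D_H\chi,\psi\rangle_H\big)\,d\mu ,$$
so its left-hand side is $F_\chi(r)=\int_{G^{-1}(-\infty,r)}\chi\,d\mu$ (trivially $\leq\|\chi\|_\infty$), while solving for the derivative term gives $\int_X (h\circ G)\,\langle D_H\chi,\psi\rangle_H\,d\mu$ --- with weight $h\circ G$, \emph{not} the indicator $\one_{G^{-1}(-\infty,r)}=h'\circ G$ that actually appears in \eqref{e4m}. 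The quantity you must control, $\int_{G^{-1}(-\infty,r)}\langle D_H\chi,\psi\rangle_H\,d\mu$, is untouched by this manipulation; and integrating by parts once more to remove $D_H\chi$ from the $h\circ G$--weighted term just reintroduces $h'\circ G$ and sends you in a circle. (The reason no purely algebraic integration by parts can succeed is that $div_{\mu}(\one_{\{G<r\}}\psi)$ carries a surface term --- the very object the paper is constructing.) A secondary point: the Ornstein--Uhlenbeck semigroup does not in general converge to $\varphi$ \emph{uniformly on $X$} for $\varphi\in BUC(X;\R)$ (the increment $(e^{-t}-1)x$ is unbounded in $x$), so the reduction step also needs care; the paper uses Miculescu's theorem to approximate uniformly by bounded Lipschitz functions, which lie in $W^{1,p'}(X,\mu)$.

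The paper closes the gap with the disintegration Theorem \ref{tA.1}: writing $F_\varphi(r)=\int_{-\infty}^r\big(\int_X\varphi\,dm_s\big)q_1(s)\,ds$ with $m_s$ probability measures gives $F_\varphi'(r)=q_1(r)\int_X\varphi\,dm_r$ for a.e.\ $r$, hence $|q_\varphi(r)|\leq q_1(r)\|\varphi\|_\infty$ a.e., and then everywhere by continuity of both sides; this is estimate \eqref{e9f}, which is what makes $(F_{\varphi_n})$ Cauchy in $C^1_b(\R)$. If you want to avoid disintegration, an elementary substitute is available from what you already have: $|F_\chi(\beta)-F_\chi(\alpha)|=\big|\int_{G^{-1}(\alpha,\beta)}\chi\,d\mu\big|\leq\|\chi\|_\infty\big(F_1(\beta)-F_1(\alpha)\big)$, and dividing by $\beta-\alpha$ and letting $\beta\to\alpha$, using the continuity of $q_\chi$ and $q_1$ from Lemma \ref{Nualartvarphi}, yields $|q_\chi(\alpha)|\leq q_1(\alpha)\|\chi\|_\infty$ for every $\alpha$. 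Either of these must replace your integration-by-parts step; as written, the proof is incomplete at its decisive point.
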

\begin{proof}
To begin with, let $\varphi:X\mapsto \R$ be Lipschitz continuous. By Lemma \ref{Nualartvarphi}, for each $r\in \R$ we have
$$F_\varphi( r)=\int_{-\infty }^rq_\varphi(s)ds, $$
where the function $q_\varphi\in L^1(\R)$ is continuous and bounded. Hence, $F_\varphi\in C^1_b(\R)$.

Taking $\Gamma =G$ and replacing $\varphi$   by $\varphi \one_{G^{-1}(-\infty, r)}$ we write the disintegration formula \eqref{eA.1}   as
\begin{equation}
\label{e8f}
F_\varphi(r )=\int_{-\infty}^r \left(\int_{X}\varphi(x)m_s(dx)  \right)q_1(s)ds,\quad r\in \R. 
\end{equation}
Therefore, there is a Borel set $I_\varphi\subset \R$ such that $(\mu\circ G^{-1})( I_\varphi)=0$ and
$$ F'_\varphi(r)=  q_1( r) \int_{X}\varphi(x)m_r(dx),\quad r\notin I_\varphi , $$
so that 
$$| F'_\varphi ( r)|\leq q_1( r)\|\varphi\|_{\infty} , \quad r \notin I_\varphi . $$
Since both $F'_\varphi$ and $q_1$ are continuous, 
\begin{equation}
\label{e9f}
| F'_\varphi ( r)|\leq q_1( r)\|\varphi\|_{\infty} , \quad r \in \R. 
\end{equation}
Let now $\varphi\in BUC(X;\R)$. By \cite[Thm.1]{Mic}, there is a sequence of Lipschitz continuous and bounded functions $\varphi_n$ that converge uniformly to $\varphi$ on $X$. Recalling that $| F_\varphi ( r)|\leq \|\varphi\|_{L^1(X, \mu)}\leq \|\varphi\|_{\infty}$ for every $r\in \R$,  estimate \eqref{e9f} yields that $(F_{\varphi_n})$ is a Cauchy sequence in $C^1_b(\R)$, and the conclusion follows. 
 \end{proof} 

For every $\varphi\in BUC(X;\R)$ we still set 
\begin{equation}
\label{q}
q_{\varphi}( r) := F_{\varphi}'( r), \quad r\in \R. 
\end{equation}
Of course, $q_{\varphi}$ is given by \eqref{e4m}  only if $\varphi\in W^{1,p'}(X, \mu)$.

\subsection{Surface measures}
\label{Surface}

Now we are ready to prove the existence of measures on {\em every} level surface $G^{-1}(r )$.

\begin{Theorem}
\label{costruzione}
For every $r\in \R$ there exists a unique   Borel measure $\sigma_r^G$ on ${\mathcal B}(X)$ such that 
\begin{equation}
\label{e15d}
 q_\varphi ( r)  =\int_{X} \varphi(x)\,\sigma_r^G(dx),\quad \varphi\in BUC(X; \R). 
\end{equation}
Moreover, the support of $\sigma_r^G$ is contained in $G^{-1}( r)$, and $\sigma_r^G(X)=q_1(r )$. Therefore, $\sigma_r^G$ is nontrivial  iff $q_1(r )> 0$.  
\end{Theorem}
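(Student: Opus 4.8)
The plan is to use the map $\varphi \mapsto q_\varphi(r)$ as a positive linear functional on $BUC(X;\R)$ and to represent it by a Borel measure. First I would observe that for fixed $r$ the assignment $\varphi \mapsto q_\varphi(r) = F_\varphi'(r)$ is linear (clear from linearity of $\varphi \mapsto F_\varphi$ and of differentiation) and positive: if $\varphi \geq 0$ then $F_\varphi$ is nondecreasing, being $r\mapsto \int_{G^{-1}(-\infty,r)}\varphi\,d\mu$, hence $F_\varphi'(r)\geq 0$. Moreover estimate \eqref{e9f} gives $|q_\varphi(r)| \leq q_1(r)\|\varphi\|_\infty$, so the functional is bounded on $BUC(X;\R)$ with norm at most $q_1(r)$, and taking $\varphi \equiv 1$ shows the norm equals $q_1(r)$.

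The next step is to pass from a functional on $BUC(X;\R)$ to a Borel measure on $X$. Since $X$ is a separable Banach space, hence a Polish space, a bounded positive linear functional on $BUC(X;\R)$ need not a priori be tight; so the cleanest route is to use the disintegration formula already at hand. From \eqref{e8f} we have, for $r \notin I_\varphi$, the identity $q_\varphi(r) = q_1(r)\int_X \varphi \, m_r(dx)$, where $\{m_s\}$ is the family of Borel probability measures from Theorem \ref{tA.1} with $\Gamma = G$, and their supports lie in $G^{-1}(s)$ for $\lambda$-a.e.\ $s$. The exceptional set $I_\varphi$ depends on $\varphi$, which is the obstacle to defining $\sigma_r^G := q_1(r)\,m_r$ directly; I would circumvent this by a separability argument. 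Take a countable set $\mathcal D \subset BUC(X;\R)$ that is dense for the sup norm in a suitable algebra separating points (e.g.\ the algebra generated by $1$ and the functions $x\mapsto \arctan(\ell(x))$, $\ell$ ranging over a countable dense subset of $X^*$); then $I := \bigcup_{\varphi\in\mathcal D} I_\varphi$ is still $\lambda$-null, and for $r\notin I$ the functional $\varphi\mapsto q_1(r)\int_X\varphi\,m_r(dx)$ agrees with $\varphi\mapsto q_\varphi(r)$ on $\mathcal D$, hence on all of $BUC(X;\R)$ by density and the uniform bound \eqref{e9f}. So for $\lambda$-a.e.\ $r$ we may take $\sigma_r^G = q_1(r)\,m_r$, which is automatically a Borel measure with $\sigma_r^G(X) = q_1(r)$ and support in $G^{-1}(r)$.

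For the remaining values of $r$ (those in the null set $I$, and more importantly to get the statement for \emph{every} $r$), I would argue by continuity in $r$. The point is that the right-hand side of \eqref{e15d} must be continuous in $r$ because the left-hand side is: $q_\varphi = F_\varphi'$ is continuous by Proposition \ref{C^1}. So fix $r_0$ and a sequence $r_n \to r_0$ with $r_n \notin I$; the measures $\sigma_{r_n}^G$ have total mass $q_1(r_n) \to q_1(r_0)$, and one expects them to converge weakly to a limit measure $\sigma_{r_0}^G$ satisfying \eqref{e15d}. Tightness of $\{\sigma_{r_n}^G\}$ is the delicate point: since these are (up to the scalar $q_1(r_n)$) probability measures on the Polish space $X$, and $\int_X \varphi\,\sigma_{r_n}^G(dx) = q_\varphi(r_n) \to q_\varphi(r_0)$ for every $\varphi \in BUC(X;\R)$, one gets that the functionals converge; uniqueness of the representing measure then forces $\sigma_{r_0}^G$ to be well-defined and independent of the chosen sequence, provided one can produce \emph{some} Borel measure representing the limit functional. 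This is where I would invoke that $BUC(X;\R)$ on a metric space determines measures and that a pointwise limit of a net of probability measures, if it is a measure, is unique; alternatively, one can directly define $\sigma_{r_0}^G(B) := \lim_n \sigma_{r_n}^G(B)$ on a generating $\pi$-system of sets and check $\sigma$-additivity via the uniform total-mass bound.

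\medskip

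The main obstacle I expect is exactly the tightness/existence of the representing measure for \emph{every} $r$ rather than for a.e.\ $r$: the disintegration theorem gives the $m_r$ only up to a $\lambda$-null set, and a separable Banach space $BUC(X;\R)$-functional is not automatically represented by a (Radon) measure without some compactness input. The disintegration route sidesteps this for a.e.\ $r$ for free (the $m_r$ are genuine Borel probability measures on the Polish space $X$), and the continuity of $q_\varphi$ in $r$ together with the uniform bound $|q_\varphi(r)| \le q_1(r)\|\varphi\|_\infty$ should let one extend to all $r$; but making the "$\sigma_{r_0}^G = $ weak limit of $\sigma_{r_n}^G$" rigorous requires care, and I would look to either a direct Carathéodory-type construction of the set function or a known theorem asserting that on a Polish space a weak-$*$ convergent sequence of sub-probability measures has a sub-probability limit. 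Uniqueness of $\sigma_r^G$, once existence is in hand, is immediate since $BUC(X;\R)$ separates points of $X$ and hence measures coincide if they integrate all $\varphi \in BUC(X;\R)$ the same way; the support statement follows from the a.e.\ identity $\sigma_r^G = q_1(r) m_r$ combined with the support property of $m_r$, pushed to all $r$ by the same continuity argument (any $\varphi$ supported away from $G^{-1}(r)$ has $q_\varphi(r) = 0$).
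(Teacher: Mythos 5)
There is a genuine gap, and you have in fact located it yourself without closing it: the existence of a countably additive Borel representing measure for \emph{every} $r$, not just $\lambda$-a.e.\ $r$. Your disintegration argument correctly produces $\sigma_r^G=q_1(r)\,m_r$ outside a $\lambda$-null set $I$ (this is essentially the content of the Remark following the theorem in the paper), and uniqueness and the support property are handled correctly. But the passage to an arbitrary $r_0\in I$ is not a proof. Knowing that $\int_X\varphi\,d\sigma_{r_n}^G=q_\varphi(r_n)\to q_\varphi(r_0)$ for every $\varphi\in BUC(X;\R)$ only tells you that the limit is a positive linear functional on $BUC(X;\R)$ of norm $q_1(r_0)$; on a non-compact Polish space such a functional is in general represented only by a finitely additive measure on a compactification (mass can escape: think of $\delta_{x_n}$ with $\|x_n\|\to\infty$). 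Your two fallbacks do not repair this: ``a pointwise limit of probability measures, \emph{if it is a measure}, is unique'' is circular, and defining $\sigma_{r_0}^G(B):=\lim_n\sigma_{r_n}^G(B)$ on a $\pi$-system gives neither existence of the limits nor $\sigma$-additivity without uniform tightness. The sup-norm bound \eqref{e9f} cannot supply tightness, because it sees only $\|\varphi\|_\infty$.

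The missing idea is a compactness input specific to the Gaussian setting, and it is exactly what the paper's proof supplies. Since the embedding $H\subset X$ is compact, one can choose a compact, symmetric, convex $K\supset B_H(0,1)$ with $\mu(K)>0$; by the $0$--$1$ law the sets $K_n:=nK$ satisfy $\mu(K_n)\to1$, and the Minkowski functional of $K$ is $H$-Lipschitz, so the cutoffs $\theta_n=\alpha(\m(\cdot/n))$ (equal to $1$ on $K_n$, $0$ off $K_{2n}$) converge to $1$ in $W^{1,p'}(X,\mu)$. The Sobolev estimate \eqref{e5m} — not the sup-norm estimate — then gives $q_{1-\theta_m}(r)\le C\|1-\theta_m\|_{W^{1,p'}(X,\mu)}$ uniformly in $r$, which is precisely the uniform tightness (equivalently, the statement $\sigma_r^G(X)=L(1)$ with no mass lost at infinity) that your argument lacks. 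The paper does not even pass through weak limits in $r$: it works at a fixed, arbitrary $r$, restricts the functional $L$ to $C(K_n;\R)$ via an infimum over positive extensions, applies the Riesz theorem on each compact $K_n$, and takes the increasing limit of the resulting measures, using the $\theta_n$ and \eqref{e5m} to show no mass is lost. If you want to keep your disintegration-plus-approximation route, you would still need to import this same ingredient to prove uniform tightness of $\{q_1(r_n)m_{r_n}\}$, e.g.\ via $\sigma_{r_n}^G(X\setminus K_{2m})\le q_{1-\theta_m}(r_n)\le C\|1-\theta_m\|_{W^{1,p'}(X,\mu)}$; without it the construction for every $r$ does not go through.
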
 
 \begin{proof}
Fix $r\in \R$ and set
\begin{equation}
\label{L}
L(\varphi):=  q_\varphi ( r)  =  F'_{\varphi}(r )  ,\quad  \varphi\in BUC(X; \R) \cup W^{1,p'}(X, \mu). 
\end{equation}
Since $F_{\varphi}$  is an increasing function for every $\varphi$ with nonnegative values, then $L(\varphi )   \geq 0$ if $\varphi(x)\geq 0$ a.e. 
Linear positive functionals  defined on $BUC(X; \R)$ have not necessarily an integral representation such as \eqref{e15d}. To show that this is the case, we use the following procedure. We approach $X$ by a sequence of compact sets $K_n$ such that $\lim_{n\to \infty} \mu(K_n) =1$ and we consider suitable restrictions $L_n$ of $L$ to $C(K_n;\R)$. By  the Riesz Theorem, such restrictions are represented by measures defined on the Borel sets of $K_n$,  readily extended to measures $\lambda_n$ on all Borel sets of $X$. Since  $(\lambda_n(B))$ is an increasing sequence for every Borel set $B$, we set  $\sigma_r^G(B) := \lim_{n\to \infty} \lambda_n(B)$ and  we prove that $\sigma_r^G$ is a  measure, that satisfies 
 \eqref{e15d}. 

Let $K$ be a compact subset of $X$ with positive measure. Since the embedding $H\subset X$ is compact, we may assume that $K $ contains $B_H(0,1)$. Moreover, replacing $K$ by its absolutely convex hull, we may assume that $K$ is symmetric (namely, $K=-K$) and convex. The linear span $E$ of $K$ is a measurable subspace of $X$ with positive measure; by the $0-1$ law (e.g., \cite[Thm. 2.5.5]{Boga}) it has measure $1$. Therefore, setting
$$K_n := nK, $$
we have
$$\lim_{n\to \infty} \mu(K_n) =1. $$
Now we follow a classical procedure in measure theory, see e.g. \cite[Ch. 6]{Beppe}. For any $n\in \N$  we consider the restriction $L_n$ of $L$ to $K_n$ defined for all $\varphi\ge 0$ as
$$L_n(\varphi)=\inf\{L(\psi):\;\psi\in BUC(X; \R),\;\psi= \varphi\;\mbox{\rm on}\;K_n,\;\;\psi\ge 0\;\mbox{\rm on}\;X \},$$
while if $\varphi $ takes  both positive and negative values, $L_n\varphi $ is defined by
$$L_n\varphi = L_n \varphi^+ - L_n \varphi^-, $$
where $\varphi^+ $ and $ \varphi^-$ denote the positive and the negative part of $\varphi$. The set used in the definition of $L_n$ is not empty, for instance it contains  the extension  studied in  \cite{Man90}, 
$$\psi (x) =\left\{ \begin{array}{ll}
\varphi(x), & x\in K_n, 
\\
\\
\inf_{u\in K_n}\frac{\varphi(u)}{\|x-u\|}{{\rm dist}\,(x, K_n)}, & x \notin K_n. 
\end{array}\right. $$
Then, $L_n$ is a positive linear functional in $C(K_n; \R)$. Positivity follows immediately from the positivity of $L$, linearity is not immediate although elementary, it may be proved as in Lemma 6.4 of \cite{Beppe}. Then, 
there exists a Borel measure  $\lambda_n$ on $K_n$ such that
$$ L_n(\varphi)=\int_{K_n} \varphi \,d\lambda_n,\quad \varphi\in  C(K_n; \R). $$
The obvious  extension of $\lambda_n$ to  ${\mathcal B}( X)$,  $B\mapsto  \lambda_n(B\cap K_n)$, is still denoted by $\lambda_n$. 

For every $\varphi\in BUC(X; \R)$ with nonnegative values the sequence $( L_n(\varphi))$ is increasing, since $\{L(\psi):\;\psi\in BUC(X; \R),\;\psi= \varphi\;\mbox{\rm on}\;K_{n+1},\;\;\psi\ge 0\;\mbox{\rm on}\;X \}$ $\subset $ $\{L(\psi):\;\psi\in BUC(X; \R),\;\psi= \varphi\;\mbox{\rm on}\;K_n,\;\;\psi\ge 0\;\mbox{\rm on}\;X \}$ for every $n\in \N$. 
It follows that for every  $B\in {\mathcal B}( X)$ the sequence $(\lambda_n(B))$ is increasing. Setting
$$\sigma_r^G (B) :=    \lim_{n\to \infty} \lambda_n(B), $$
we claim that $\sigma_r^G$ is a measure on ${\mathcal B}( X)$ and that \eqref{e15d} holds. 

Note that if $A$, $B$ are Borel sets such that $A\subset B$, then  $\sigma_n(A)\leq \sigma_n(B)$ for every $n$,  and consequently 
$\sigma_r^G(A)\leq  \sigma_r^G(B)$. 
Let now $B$, $B_m\in {\mathcal B}( X)$ be such that $B_m\uparrow B$. Then
$$\begin{array}{l}
\lim_{m\to \infty}\sigma_r^G(B_m) =   \lim_{m\to \infty}( \lim_{n\to \infty} \lambda_n(B_m)) =   \sup_{m\in \N}( \sup_{n\in \N} \lambda_n(B_m))
\\
\\
=   \sup_{n\in \N} ( \sup_{m\in \N}\lambda_n(B_m)) =  \sup_{n\in \N} \lambda_n(B) = \sigma_r^G(B). 
\end{array}$$
So, $\sigma_r^G $ is a measure. 
As a next step, we prove that \eqref{e15d} holds for $\varphi\equiv 1$. 
To this aim we construct a sequence of $W^{1,p'}(X, \mu)$ functions $\theta_n$, such that $\theta_n\equiv 1$ on $K_n$ and $\theta_n\equiv 0$ outside $K_{2n}$. The starting point is the Minkowsky functional of $K$, 
$$  \m(x): =\inf\left\{ \lambda\geq 0 : x  \in \lambda K \right\}, \quad x\in E, $$
which is positively homogeneous, sub--additive, and $H$-Lipschitz since $K$ contains the unit ball of $H$. Indeed, for any   $h\in   H$ we have
\[
\frac{h}{|h|_H}\in B_H(0,1)\subset K,   
\]
that is,  $h \in |h|_H  K$, that implies $\m(h)\leq  |h|_H $.
As a consequence, for any $x\in E$, $h\in H$, 
\[
\m(x+h)     \leq \m(x) +\m(h) \leq \m(x)+ |h|_H, 
\]
and 
$$\m(x) = \m(x+h-h) \leq \m(x+h) + \m(-h) \leq \m(x+h) + |h|_H, $$
so that 
\[
| \m(x+h)-\m(x)|\leq |h|_H. 
\]
Since $\mu(E)=1$, then the null extension of $\m$ to the whole of $X$   is $H$-Lipschitz,  so that it belongs to $W^{1,q}(X, \mu)$ for every $q>1$ (e.g., \cite[Ex. 5.4.10]{Boga}). Now, let $\alpha\in C^{\infty}_c(\R)$ be such that $\alpha\equiv 1$ in $[0, 1]$, $\alpha \equiv 0$ in $[2, +\infty)$, $0\leq \alpha \leq 1$, and set 
$$\theta_n (x) \left\{ \begin{array}{l}
=\ds  \alpha(\m(x/n) ), \quad x\in E, 
\\
\\
= 0, \quad x\notin E. 
\end{array}\right. $$
Then $\theta_n\in W^{1,q}(X, \mu)$ for every $q>1$. Recalling that for every $x\in E$ we have $\m(x/n)\leq 1$ iff $x\in K_n$, we obtain 
$\theta_n\equiv 1$ in $K_n$, $\theta_n \equiv 0$ outside $K_{2n}$, and
$$\lim_{n\to \infty} \|\theta_n-1\|_{W^{1,p'}(X, \mu)} = 0.$$
Indeed, $\lim_{n\to \infty} \theta_n(x) =1$ for every $x\in E$ and $0\leq \theta_n(x)\leq 1$, so that $\lim_{n\to \infty} \theta_n =1$ in $L^{p'}(X, \mu)$. Moreover, 
$$D_H\theta_n(x) = \frac{1}{n}\alpha' (  \m(x/n) ) D_H\m(x/n), $$
so that $\lim_{n\to \infty}   D_H\theta_n =0$ in $L^{p'}(X, \mu; H)$. 

Then, 
$$\sigma_r^G(X) = \lim_{n\to \infty} \lambda_n(X) = \lim_{n\to \infty}\int_X 1\,d\lambda_n = \lim_{n\to \infty} L_{n}(1)
= \lim_{n\to \infty} L_{2n}(1). $$
On the other hand, for every $\psi\in BUC(X)$ such that $\psi \geq 1$ in $K_{2n}$, $\psi\geq 0$ in $X$, we have $\psi \geq \theta_n$ and therefore $L\psi \geq L(\theta_n)$, since $L(\psi)- L(\theta_n)$ is the derivative at $r$ of the increasing function  $\xi\mapsto \mu\{ x: \;\psi(x) - \theta_n(x) \leq \xi\}$. 
Taking the infimum, we get $L_{2n}(1) \geq L(\theta_n)$. Since $\theta_n$ goes to $1$ in $W^{1,p'}(X, \mu)$, by Lemma \ref{Nualartvarphi}  $L(\theta_n)$ goes to $L(1) =q_{1}( r)$ as $n\to \infty$. This shows that 
\begin{equation}
\label{L(1)}
\sigma_r^G(X)=q_{1}( r) = L(1). 
\end{equation}
Now we show that \eqref{e15d} holds for any $\varphi\in BUC(X; \R)$. It is sufficient to prove that it holds for every  $\varphi \in BUC(X; \R)$ with   values in $[0, 1]$. In this case, by definition, 
$$L\varphi \geq L_{n}(\varphi_{|K_n}) = \int_{X}\varphi \,d\lambda_n $$
where the right--hand side converges to $\int_X \varphi \, d\sigma_r^G $ as $n\to \infty$, since the sequence $(\lambda_n )$ weakly converges to $\sigma_r^G$. Therefore, 
$$L\varphi \geq  \int_{X}\varphi \,d\sigma_r^G $$
Now we remark that  $1-\varphi$ has positive values, and using \eqref{L(1)} and the above inequality we get
$$q_1(r ) -L \varphi = L(1 -\varphi) \geq  \int_{X}(1 -\varphi)\,d\sigma_r^G = q_{1}( r)-  \int_{X} \varphi\,d\sigma_r^G $$
so that 
$$L \varphi \leq  \int_{X} \varphi\,d\sigma_r^G , $$
and \eqref{e15d} follows. 

It remains to prove that $\sigma_r^G$ has support in $G^{-1}( r)$. To this aim, we remark that for every $\eps>0$ and $\varphi\in BUC(X; \R)$ with support contained in $G^{-1}(-\infty, r-\eps) \cup G^{-1}(  r+ \eps , +\infty)$, the function $  F_{\varphi}$ is constant in $(r-\eps, r+\eps)$, and therefore 
$F_{\varphi}'( r) =0$. By \eqref{e15d}, $\int_X \varphi \,d\sigma_r^G=0$. So, the support of $\sigma_r^G$ is contained in $\cap_{\eps >0}G^{-1}[r-\eps, r+\eps] = G^{-1}( r)$. 
 \end{proof}

 \begin{Remark}
Let $m_r$ be the probability measures given by the disintegration theorem \ref{tA.1}.
If $X^*$ is separable, then for a.e. $r\in \R$ such that  $q_1( r)>0$ we have
$$\sigma_r^G = q_1( r) m_r . $$
 \end{Remark}
 \begin{proof}
 Fix any   $\varphi\in BUC(X; \R)$. Applying formula \eqref{eA.1} to the function $\varphi \one_{G^{-1}(-\infty, r)}$ we obtain
 $$F_{ \varphi}(r ) = \int_{-\infty}^r \left(\int_{X} \varphi(x)m_s(dx)   \right)q_1(s)ds, \quad r\in \R. $$
On the other hand, by \eqref{e15d} we have
 $$F_{ \varphi}(r ) = \int_{-\infty}^r \left(\int_{X} \varphi(x)\sigma_s^G (dx)   \right) ds, \quad r\in \R. $$
 Therefore, there exists a negligible $I_{\varphi}\subset \R$ such that for every $s\notin I_{\varphi}$ we have
$$\int_X \varphi \,d\sigma_s^G  = q_1(s )\int_{X} \varphi(x)\, m_s(dx)  . $$
Let ${\mathcal F } = \{f_n:\;n\in \N\}$ be any dense subset of $X^*$, and set $I = \cup_{n\in \N}I_{e^{if_n}}$. For every $r\notin I$ we have
 $$\int_X e^{if_n}\,d\sigma_r^G = q_1(r )\int_{X} e^{if_n(x)}\, m_r(dx). $$
Approaching  every $f\in X^*$ by a  sequence of elements of ${\mathcal F }$, and using the
Dominated Convergence Theorem, we obtain that if $q_1( r)\neq 0$, then the probability measures $\sigma_r^G / q_1(r )$ and $m_r$ have the same Fourier transform, so that they coincide. 
 \end{proof}

The following proposition shows a class of sets that are negligible with respect to all measures $\sigma_r^G$. 

\begin{Proposition}
\label{capacity}
Let $B\subset X$ be a Borel set with $C_{p'}(B) = 0$. Then $\sigma_r^G(B)=0$, for every $r\in \R$. 
\end{Proposition}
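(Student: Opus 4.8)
The plan is to exploit the capacity estimate \eqref{e5m} together with the fact that $C_{p'}$-null sets are precisely those captured by $W^{1,p'}$-quasicontinuous functions that are large near $B$. First I would recall that since $C_{p'}(B)=0$, there is a sequence $g_n\in W^{1,p'}(X,\mu)$ with $g_n\geq 1$ $\mu$-a.e.\ on an open neighbourhood $A_n$ of $B$, $g_n\geq 0$, and $\|g_n\|_{W^{1,p'}(X,\mu)}\to 0$; by passing to a subsequence we may also assume $g_n\to 0$ quasi-uniformly and $C_{p'}$-quasi-everywhere, and that the $C_{p'}$-quasicontinuous versions satisfy $\tilde g_n\geq 1$ quasi-everywhere on $B$. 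The idea is that $\sigma_r^G$, being supported on the level surface and (by Proposition \ref{capacity}'s eventual conclusion) not charging capacity-null sets, should be controlled by the $W^{1,p'}$-norm through \eqref{e15d} and \eqref{e5m}.

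The key steps, in order, are: (i) reduce to showing $\sigma_r^G(K)=0$ for every compact $K$ with $C_{p'}(K)=0$, using inner regularity of the Borel measure $\sigma_r^G$ on the Polish space $X$ (so $\sigma_r^G(B)=\sup\{\sigma_r^G(K):K\subset B\ \text{compact}\}$, and $C_{p'}(K)\leq C_{p'}(B)=0$); (ii) for such a compact $K$, choose open sets $U_n\supset K$ with $C_{p'}(U_n)\to 0$, hence functions $g_n\in W^{1,p'}(X,\mu)$, $g_n\geq 1$ a.e.\ on $U_n$, $0\leq g_n$, with $\|g_n\|_{W^{1,p'}(X,\mu)}\to 0$; (iii) regularize: convolve or truncate to produce $\varphi_n\in BUC(X;\R)$ with $0\leq\varphi_n\leq 1$ on $X$, $\varphi_n\equiv 1$ on $K$, and $\|\varphi_n\|_{W^{1,p'}(X,\mu)}\to 0$ — here one uses that $\min\{g_n,1\}$ still tends to $0$ in $W^{1,p'}$ (the $H$-gradient of a truncation is dominated by that of $g_n$), and that $\min\{g_n,1\}$ can be approximated in $W^{1,p'}$-norm by uniformly continuous bounded functions that are still $\geq 1$ on a neighbourhood of $K$, e.g.\ via the Moreau–Yosida / inf-convolution regularization already used in the proof of Theorem \ref{costruzione}; (iv) apply \eqref{e15d} and \eqref{e5m}: since $\varphi_n\geq\one_K$ and $\varphi_n\geq 0$, positivity of $\sigma_r^G$ gives
$$\sigma_r^G(K)\leq\int_X\varphi_n\,d\sigma_r^G = q_{\varphi_n}(r)\leq C\|\varphi_n\|_{W^{1,p'}(X,\mu)}\longrightarrow 0,$$
whence $\sigma_r^G(K)=0$; (v) conclude by inner regularity that $\sigma_r^G(B)=0$.

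The main obstacle is step (iii): producing a genuinely \emph{bounded uniformly continuous} competitor $\varphi_n$ that simultaneously equals $1$ on $K$ (so that $\one_K\le\varphi_n$ and positivity of $\sigma_r^G$ applies) and has $W^{1,p'}$-norm going to zero, while staying nonnegative on all of $X$ so that \eqref{e15d} is legitimately applicable. The delicate point is matching the two requirements — the abstract capacity witnesses $g_n$ live only in the Sobolev space and need not be continuous, and the standard quasicontinuous modification is not automatically in $BUC(X;\R)$. I expect to handle this by first replacing $g_n$ with $\min\{g_n,1\}$, then using that bounded $W^{1,p'}$ functions are $L^{p'}$- and $W^{1,p'}$-approximated by bounded Lipschitz (hence $BUC$) functions, and combining with a cutoff of the form used for $\theta_n$ in Theorem \ref{costruzione} to repair the ``$\geq 1$ on $K$'' property at the cost of an arbitrarily small extra $W^{1,p'}$-error; alternatively, one may invoke directly that $C_{p'}$-quasicontinuous representatives of small-norm $W^{1,p'}$ functions can be taken in $BUC(X;\R)$ up to an error controlled in capacity, which is exactly the content of the capacitary approximation results in \cite[Ch.\ 5]{Boga}. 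Once this technical regularization is secured, the estimate \eqref{e5m} does all the remaining work.
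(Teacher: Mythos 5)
Your overall strategy (sandwich $\sigma_r^G(B)$ between $0$ and $C\|\cdot\|_{W^{1,p'}}$ of a capacity witness, via \eqref{e15d} and \eqref{e5m}) is the right one, but the argument has a genuine gap exactly where you flag it, in step (iii), and the tools you invoke there do not close it. You need a single function $\varphi_n$ that is simultaneously $BUC$, identically $1$ on $K$ (a \emph{pointwise} condition on a set which, having zero capacity, also has zero $\mu$-measure), and of small $W^{1,p'}$-norm. All the regularization devices you list --- truncation by $\min\{g_n,1\}$, $W^{1,p'}$-approximation of bounded Sobolev functions by Lipschitz functions, quasicontinuous modifications as in \cite[Thm.~5.9.6]{Boga} --- control the approximant only $\mu$-a.e.\ (or quasi-everywhere along a subsequence), and hence give no pointwise lower bound on the $\mu$-null set $K$; conversely, a function built to equal $1$ on $K$ by hand (e.g.\ via distance functions) has no reason to inherit the small Sobolev norm of $g_n$. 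The claim that ``$C_{p'}$-quasicontinuous representatives of small-norm $W^{1,p'}$ functions can be taken in $BUC(X;\R)$ up to an error controlled in capacity'' is precisely what would be needed, but it is not the content of the results you cite, and you do not prove it.

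The paper avoids this obstruction by never asking the $BUC$ competitor to have small Sobolev norm. It takes an open $O_\eps\supset B$ with $C_{p'}(O_\eps)<\eps$ and a Sobolev witness $f_\eps\geq \one_{O_\eps}$ $\mu$-a.e.\ with $\|f_\eps\|_{W^{1,p'}}\leq\eps$, and approximates $\one_{O_\eps}$ from \emph{below} by the $BUC$ functions $\theta_n(x)=\min\{1, n\,\dist(x,X\setminus O_\eps)\}$ (here openness of $O_\eps$ is what makes pointwise convergence $\theta_n\uparrow\one_{O_\eps}$ work, with no norm control needed). Dominated convergence and \eqref{e15d} give $\sigma_r^G(O_\eps)=\lim_n q_{\theta_n}(r)$, and then the two densities are compared through monotonicity: since $f_\eps-\theta_n\geq 0$ $\mu$-a.e., the function $F_{f_\eps-\theta_n}$ is nondecreasing, so $q_{\theta_n}(r)\leq q_{f_\eps}(r)\leq C\|f_\eps\|_{W^{1,p'}}\leq C\eps$ by \eqref{e5m}. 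This monotone comparison between a $BUC$ minorant and the Sobolev witness is the idea missing from your plan; if you replace your step (iii) by it, the rest of your argument (including the reduction to compacts, which then becomes unnecessary) goes through.
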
 
\begin{proof}
We partly follow the argument used in \cite[Lemma 6.10.1]{Boga}. 
For every $\eps >0$ let  $O_{\eps}\supset B$ be an open set such that $C_{p'}(O_{\eps})<\eps$. Then there exists $f_{\eps}\in W^{1,p'}(X, \mu)$ such that 
$\|f_{\eps}\|_{W^{1,p'}(X, \mu)}\leq \eps$ and $f_{\eps}\geq 1$ a.e.  in $O_{\eps}$.  Replacing $f_{\eps}$ by $\max \{ f_{\eps}, 0\}$ we may assume that 
$f_{\eps}\geq \one_{O_{\eps}}$, $\mu$-a.e.

Let us fix a sequence of $BUC$  functions that converge to $\one_{O_{\eps}}$ pointwise. For instance, we can take
$$\theta_n(x) =\left\{ \begin{array}{ll}
0, & x\in X\setminus O_{\eps}, 
\\
\\
n\,{\rm dist}(x, X\setminus O_{\eps}), & 0<{\rm dist}(x, X\setminus O_{\eps})< 1/n, 
\\
\\
1, & {\rm dist}(x, X\setminus O_{\eps})\geq 1/n
\end{array}\right. $$
Then, $\lim_{n\to \infty} \theta_n(x)= \one_{O_{\eps}}(x)$, for every $x\in X$. Using  the Dominated Convergence Theorem, and then formula \eqref{e15d}, we get 
$$\sigma_r^G(O_{\eps}) = \int_{X} \one_{O_{\eps}} \,d\sigma_r^G = \lim_{n\to \infty} \int_{X}  \theta_n \,d\sigma_r^G  =  \lim_{n\to \infty} q_{ \theta_n}( r). $$
On the other hand, $f_{\eps}(x) \geq \one_{O_{\eps}}(x) \geq  \theta_n (x) $, for $\mu$-a.e. $x\in X$, so that the function
$F_{f_{\eps}- \theta_n }$ is increasing. In particular, $F_{f_{\eps}- \theta_n }'( r) =  q_{f_{\eps}}( r) - q_{ \theta_n}( r) \geq 0$ for every $r\in \R$. 
Therefore, for every $r\in \R$, 
$$\sigma_r^G(O_{\eps}) \leq  q_{f_{\eps}} ( r ). $$
On the other hand, by  \eqref{e5m} we have
$$|q_{f_{\eps}} ( r )| \leq C\|f_{\eps}\|_{W^{1,p'}(X, \mu)}\leq C\eps ,$$
with $C$ independent of $\eps$. Therefore, $\sigma_r^G(O_{\eps}) \leq  C\eps $, which implies  $\sigma_r^G(B) =0$. 
\end{proof}

Proposition \ref{capacity} clarifies the dependence of the measures $\sigma_r^G$ on the version of $G$ that we have fixed. Two versions of $G$ that coincide outside a set with null $C_{p'}$ capacity give rise to the same measures $\sigma_r^G$.

\subsection{Integration by parts formulae.}

To start with, we establish an integration formula for $C^1_b$ functions that is a first step towards an integration by parts formula. The proof follows arguments from \cite{DPL,Tracce} (in fact, it is a rewriting of a part of \cite[Prop. 4.1]{Tracce} in our setting).

\begin{Proposition} 
\label{partirozza}
Let $k\in \N$ be such that either $D_kG \in W^{1,p'}(X, \mu)$ or $D_kG\in BUC(X, \R)$. Then for every $\varphi\in C^1_b(X, \R)$ and for every  $r\in \R$ we have
\begin{equation}
\label{parti_rozza}
\int_{G^{-1}(-\infty, r)} (D_k\varphi -  \hat{v}_k\varphi )\,d\mu = q_{\varphi D_kG}( r). 
\end{equation}
Moreover, \eqref{parti_rozza} holds also for every  $\varphi\in W^{1, q}(X, \mu)$ provided $D_kG \in W^{1,s}(X, \mu)$ and
\begin{equation}
\label{q,s}
\frac{1}{q} + \frac{1}{s} +\frac{1}{p} \leq 1. 
\end{equation}
\end{Proposition}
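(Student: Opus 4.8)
The plan is to integrate by parts on $X$ against a suitable vector field, using the divergence machinery recalled in Section~\ref{Notation}, and then to recognize the resulting boundary term as a density evaluated at~$r$. Concretely, fix $r\in\R$ and $\alpha<r<\beta$, and let $h:\R\to\R$ be the Lipschitz function equal to $0$ on $(-\infty,\alpha]$, equal to $s\mapsto s-\alpha$ on $[\alpha,\beta]$ and equal to $\beta-\alpha$ on $[\beta,+\infty)$, exactly as in the proof of Lemma~\ref{Nualartvarphi}. Then $h\circ G\in W^{1,p}(X,\mu)$ with $D_H(h\circ G)=(\one_{[\alpha,\beta]}\circ G)\,D_HG$. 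The key identity is
\[
\one_{G^{-1}(\alpha,\beta)}=\frac{\langle D_H(h\circ G),D_HG\rangle_H}{|D_HG|_H^2}.
\]
Multiplying by $\varphi\, D_k G$ (with $\varphi\in C^1_b$) and integrating, one gets $\int_{G^{-1}(\alpha,\beta)}\varphi\,D_kG\,d\mu$ on the left; on the right one has $\int_X (h\circ G)\big(-\mathrm{div}_\mu\big(\varphi\,D_kG\,\psi\big)\big)d\mu$ where $\psi=D_HG/|D_HG|_H^2$, using that $\varphi D_kG\,\psi\in W^{1,p'/?}$ — here the hypothesis that $D_kG\in W^{1,p'}(X,\mu)$ (or $\in BUC$) is what guarantees $\varphi D_kG\,\psi$ lies in a Sobolev space to which $\mathrm{div}_\mu$ applies, by the product rule and \eqref{G}. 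Writing $h\circ G(x)=\int_\R \one_{G^{-1}(s,+\infty)}(x)\one_{[\alpha,\beta]}(s)\,ds$ and applying Fubini exactly as in Lemma~\ref{Nualartvarphi}, together with the zero-mean-value property of the divergence of a $W^{1,p}$ vector field, yields
\[
\int_{G^{-1}(\alpha,\beta)}\varphi\,D_kG\,d\mu=\int_\alpha^\beta\!dr'\!\int_X \one_{G^{-1}(-\infty,r')}\,\mathrm{div}_\mu(\varphi\,D_kG\,\psi)\,d\mu.
\]
By definition of $q_{\varphi D_kG}$ (Lemma~\ref{Nualartvarphi}), the left side equals $\int_\alpha^\beta q_{\varphi D_kG}(r')\,dr'$, and since $\alpha,\beta$ are arbitrary and $q_{\varphi D_kG}$ is continuous we read off
\[
q_{\varphi D_kG}(r)=\int_{G^{-1}(-\infty,r)}\mathrm{div}_\mu(\varphi\,D_kG\,\psi)\,d\mu.
\]

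The remaining step is to compute $\mathrm{div}_\mu(\varphi\,D_kG\,\psi)$ and check it equals $D_k\varphi-\hat v_k\varphi$. Using the product rule $\mathrm{div}_\mu(\varphi\,W)=\varphi\,\mathrm{div}_\mu W+\langle D_H\varphi,W\rangle_H$ with $W=D_kG\,\psi=D_kG\,D_HG/|D_HG|_H^2$, and then $\mathrm{div}_\mu(D_kG\,\psi)$ expanded once more, one is left with an expression that should collapse to $\hat v_k\cdot(\text{something})$ plus a derivative term. The cleanest route is to observe that $\langle D_H\varphi,W\rangle_H=D_kG\,\langle D_H\varphi,D_HG\rangle_H/|D_HG|_H^2$, which is not obviously $D_k\varphi$; instead one should sum over $k$, or rather: note that $W$ is precisely the component of a vector field whose full expression, when one uses the identity $\sum_j D_jG\,v_j=D_HG$ formally, telescopes. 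In fact the identity \eqref{parti_rozza} is the $k$-th component version of the global integration by parts $\int_{G^{-1}(-\infty,r)}\mathrm{div}_\mu\Phi\,d\mu=\int_X\langle D_H(\one_{G^{-1}(-\infty,r)}\text{-mollified}),\Phi\rangle$; so I expect the correct bookkeeping is to take the vector field $\Phi=\varphi\,\psi$ and use $\langle D_H(h\circ G),\Phi\rangle_H=(\one\circ G)\langle D_HG,\varphi\psi\rangle_H=(\one\circ G)\varphi$, recovering Lemma~\ref{Nualartvarphi}'s computation with an extra $\varphi$, and separately account for the $D_kG$ factor via $D_k(h\circ G)=(\one\circ G)D_kG$. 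Tracking this carefully gives $\mathrm{div}_\mu(\varphi\,D_kG\,\psi)=D_k\varphi-\hat v_k\varphi$ after cancellation of the terms involving second derivatives of $G$ and $|D_HG|_H^{-2}$; this algebraic cancellation is the part requiring care, and it is essentially the content of \cite[Prop.~4.1]{Tracce}.

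For the Sobolev extension — \eqref{parti_rozza} for $\varphi\in W^{1,q}(X,\mu)$ under \eqref{q,s} — the plan is a routine density argument. Given $\varphi\in W^{1,q}$, approximate it by smooth cylindrical $\varphi_n\to\varphi$ in $W^{1,q}$. Both sides of \eqref{parti_rozza} are continuous in $\varphi$ with respect to the $W^{1,q}$-norm: the left side because $D_k\varphi_n\to D_k\varphi$ in $L^q$ and $\hat v_k\varphi_n\to\hat v_k\varphi$ in $L^{q'}$-type spaces (here $\hat v_k\in X^*\subset L^r$ for all $r$, so $\hat v_k\varphi_n\to\hat v_k\varphi$ in $L^q$), and the right side because $q_{\varphi_n D_kG}(r)\to q_{\varphi D_kG}(r)$ — this is where \eqref{q,s} enters, ensuring via Hölder that $\varphi_n D_kG\to\varphi D_kG$ in $W^{1,p'}(X,\mu)$: indeed $D_H(\varphi D_kG)=D_H\varphi\,D_kG+\varphi\,D_H(D_kG)$, and $\tfrac1q+\tfrac1s\le\tfrac1{p'}-\tfrac1p$... more precisely one checks $\varphi D_kG\in W^{1,p'}$ with the exponents $\tfrac1q+\tfrac1s+\tfrac1p\le1$ guaranteeing $\|\varphi D_kG\|_{W^{1,p'}}\lesssim\|\varphi\|_{W^{1,q}}\|D_kG\|_{W^{1,s}}$ by Hölder, using also the $L^p$-bound on $|D_HG|_H^{-1}\cdot(\cdots)$ implicit in \eqref{G}. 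Then Lemma~\ref{Nualartvarphi}, specifically the estimate \eqref{e5m}, gives $|q_{(\varphi_n-\varphi)D_kG}(r)|\le C\|(\varphi_n-\varphi)D_kG\|_{W^{1,p'}}\to0$, and passing to the limit in \eqref{parti_rozza} for $\varphi_n$ yields it for $\varphi$.

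The main obstacle I anticipate is the algebraic identity $\mathrm{div}_\mu(\varphi\,D_kG\,D_HG/|D_HG|_H^2)=D_k\varphi-\hat v_k\varphi$: one must expand the Gaussian divergence of a product, use the chain/product rules for $D_H$ applied to $D_kG$ and to $|D_HG|_H^{-2}$, and see a substantial cancellation of the terms carrying $D^2_HG$ and $\langle D_HG,D_H D_kG\rangle_H$. Making this rigorous requires that all the intermediate objects genuinely lie in Sobolev spaces where $\mathrm{div}_\mu$ and the product rule are licit, which is precisely why the hypotheses "$D_kG\in W^{1,p'}$ or $D_kG\in BUC$" and \eqref{G} are imposed; verifying these memberships (and hence that no boundary-at-infinity terms are dropped) is the technical heart of the argument, and it mirrors the corresponding verification in \cite[Prop.~4.1]{Tracce}.
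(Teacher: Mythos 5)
There is a genuine gap in the first (and main) part of your argument. The pointwise identity you defer to ``careful tracking,'' namely
$$div_{\mu}\Big(\varphi\,D_kG\,\frac{D_HG}{|D_HG|_H^2}\Big)=D_k\varphi-\hat v_k\varphi,$$
is false: the left-hand side, expanded by the product rule, contains $\langle D_H(\varphi D_kG),D_HG\rangle_H/|D_HG|_H^2$ and $\varphi D_kG\, div_{\mu}(D_HG/|D_HG|_H^2)$, which involve second derivatives of $G$ and derivatives of $\varphi$ in all directions, while the right-hand side does not involve $G$ at all. A two-dimensional example kills it: for the standard Gaussian on $\R^2$, $G(x)=x_2$ and $k=1$, the vector field $\varphi D_1G\,\psi$ vanishes identically, yet $D_1\varphi-\hat v_1\varphi\not\equiv 0$. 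What is true --- and what the proposition asserts --- is only the equality of the \emph{integrals} of the two expressions over every sublevel set $G^{-1}(-\infty,r)$ (equivalently, equality of the densities of the associated image measures), and this cannot follow from a pointwise divergence computation. As written, your first displayed chain merely re-derives formula \eqref{e4m} for $q_{\varphi D_kG}$, i.e.\ Lemma \ref{Nualartvarphi} applied to the function $\varphi D_kG$, and never connects it to $\int_{G^{-1}(-\infty,r)}(D_k\varphi-\hat v_k\varphi)\,d\mu$. A further defect: under the alternative hypothesis $D_kG\in BUC(X,\R)$ the field $\varphi D_kG\,D_HG/|D_HG|_H^2$ need not belong to any $W^{1,q}(X,\mu;H)$ (a $BUC$ function need not be $H$-differentiable), so your divergence manipulations are not even licit in that case.

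The paper's route avoids the cancellation issue entirely. One applies the elementary formula \eqref{partiX} to the product $\varphi\cdot(\theta_\eps\circ(G-r))$, where $\theta_\eps$ is a piecewise linear approximation of $\one_{(-\infty,0)}$, obtaining
$$\int_X(D_k\varphi-\hat v_k\varphi)\,(\theta_\eps\circ(G-r))\,d\mu=\frac1\eps\int_{G^{-1}(r-\eps,r)}\varphi D_kG\,d\mu=\frac1\eps\int_{r-\eps}^{r}q_{\varphi D_kG}(\xi)\,d\xi.$$
Letting $\eps\to0$, the left side tends to $\int_{G^{-1}(-\infty,r)}(D_k\varphi-\hat v_k\varphi)\,d\mu$ by dominated convergence, and the right side tends to $q_{\varphi D_kG}(r)$ because $q_{\varphi D_kG}$ is continuous --- and this is exactly where the hypothesis on $D_kG$ enters: if $\varphi D_kG\in W^{1,p'}(X,\mu)$ continuity comes from Lemma \ref{Nualartvarphi}, if $\varphi D_kG\in BUC(X;\R)$ it comes from Proposition \ref{C^1}. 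Your second part (the density argument under \eqref{q,s}, with H\"older giving $\varphi_nD_kG\to\varphi D_kG$ in $W^{1,p'}(X,\mu)$ and \eqref{e5m} controlling $q_{(\varphi_n-\varphi)D_kG}$) is correct and matches the paper, but it rests on the unproven first part; also, the bound on $|D_HG|_H^{-1}$ from \eqref{G} plays no role there.
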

\begin{proof} Fix $\varphi\in C^1_b(X; \R)$. For $\eps >0$ we define a function $\theta_{\eps}$ by 
$$\theta_{\eps}(\xi) :=\left\{ \begin{array}{ll} 
1, & \xi\leq  -\eps, 
\\
-\frac{1}{ \eps} \xi, &  -\eps <\xi < 0 , 
\\
0, & \xi \geq 0 .
\end{array}\right.  $$
and we consider the function
$$ x\mapsto  \varphi(x)  \theta_{\eps}(G(x)-r), $$
which belongs to $W^{1,p'}(X, \mu)$, and its derivative along  $v_k$ is  $  \theta_{\eps}'(G(x))D_k G(x)\varphi(x) $ $+$ $  \theta_{\eps}(G(x))D_k \varphi(x)$. Applying the integration by parts formula \eqref{partiX} we get 
\begin{equation}
\label{prima}
\int_X (D_k\varphi - \hat{v}_k \varphi)  (\theta_{\eps}\circ G)  \,d\mu = \frac{1}{ \eps} \int_{G^{-1}(r-\eps, r)} \varphi D_k G \,d\mu  , \quad k\in \N.
\end{equation}
As $\eps\to 0$, $\theta_{\eps}\circ G$ converges pointwise to  $ \one_{G^{-1}(-\infty, r)}$. Since  $0\leq  \theta_{\eps}\circ G \leq 1$, by the Dominated Convergence Theorem the left hand side converges to 
$$\ \int_{G^{-1}(-\infty, r)} (D_k\varphi -  \hat{v}_k\varphi) \,d\mu .$$
Concerning the right hand side, for every $\eps >0$ we have 
$$  \frac{1}{ \eps} \int_{G^{-1}(r-\eps, r)}    \varphi D_k G \,d\mu  =    \frac{1}{ \eps} \int_{r-\eps}^{r} q_{ \varphi D_k G}(\xi)d\xi . $$
Since $ \varphi D_k G$ belongs to  $W^{1,p'}(X, \mu)$ or to $ BUC(X;\R)$, by Lemma \ref{Nualartvarphi} or by Proposition \ref{C^1} the function $q_{ \varphi D_k G}$ is continuous. Therefore, 
$$\lim_{\eps \to 0}  \frac{1}{ \eps} \int_{G^{-1}(r-\eps, r)}    \varphi D_k G \,d\mu  = q_{ \varphi D_k G}( r) , $$
and \eqref{parti_rozza} follows. 

Let now  $\varphi\in W^{1,q}(X, \mu)$, $D_kG\in W^{1,s}(X, \mu)$, with $q$, $s$ satisfying \eqref{q,s}.   Let $\varphi_n\in C^1_b(X, \mu)$ approach 
$\varphi$ in $W^{1,q}(X, \mu)$, so that $\varphi_n D_kG$ approaches $\varphi D_kG$ in $W^{1,p'}(X, \mu)$. By \eqref{parti_rozza}, for every $r\in \R$ and $n\in \N$ we have
$$\int_{G^{-1}(-\infty, r)} (D_k\varphi_n -  \hat{v}_k\varphi _n)\,d\mu = q_{\varphi_n D_kG}( r). $$
Letting $n\to \infty$, the left hand side goes to $\int_{G^{-1}(-\infty, r)} (D_k\varphi_n -  \hat{v}_k\varphi _n)\,d\mu $, while the right hand side goes to $q_{\varphi D_kG}( r)$ by \eqref{e5m}. 
 \end{proof}

The measures $\sigma_r^G$ constructed in Theorem \ref{costruzione} are trivial  if $q_1 (r )=0$. 
So,  it is important to know for which values of $r$ we have $q_1(r )> 0$. 
This question was addressed in the paper \cite{HS}, where it was proved that under the assumptions of \cite{AM}, the set $I:= \{r\in \R:\;q_1( r)>0\}$ is an interval. Here we improve such a result, characterizing $I$ under more general assumptions and with a different simpler proof.

\begin{Lemma}
\label{HS}
Assume that for every $k\in \N$, $D_kG\in W^{1, p'}(X, \mu)\cup BUC(X, \R)$. Then $\{r\in \R:\;q_1( r)>0\} = (\essinf G, \esssup G)$. 
\end{Lemma}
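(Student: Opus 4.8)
The plan is to prove the two inclusions separately. First I would show that $q_1(r)>0$ only if $\essinf G < r < \esssup G$. This is the easy direction: by Lemma \ref{Nualart}, $q_1$ is the density of $\mu\circ G^{-1}$ with respect to Lebesgue measure, so $q_1$ vanishes almost everywhere outside the essential range of $G$, which is the closed interval $[\essinf G, \esssup G]$. Since $q_1$ is continuous (Lemma \ref{Nualart}), it actually vanishes on the whole open complement, and in particular on $(-\infty,\essinf G)\cup(\esssup G,+\infty)$. So $\{r:q_1(r)>0\}\subset(\essinf G,\esssup G)$ (after noting the endpoints are a null set and using continuity, $q_1$ could in principle be positive at an endpoint only if $q_1$ did not vanish in a neighborhood, which it does).

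For the reverse inclusion, fix $r_0\in(\essinf G,\esssup G)$ and suppose for contradiction that $q_1(r_0)=0$. Since $q_1\geq 0$ and $q_1$ is continuous with $\int_{\R}q_1\,dr=1$, the set $\{q_1>0\}$ is open; I want to show it is actually all of $(\essinf G,\esssup G)$, so it suffices to derive a contradiction from the existence of a point $r_0$ in that interval where $q_1$ vanishes. The key tool is the integration by parts formula of Proposition \ref{partirozza}: for $\varphi\in C^1_b(X;\R)$ and any $k$ with $D_kG\in W^{1,p'}(X,\mu)\cup BUC(X,\R)$,
\[
\int_{G^{-1}(-\infty,r)}(D_k\varphi-\hat v_k\varphi)\,d\mu = q_{\varphi D_kG}(r),
\]
and by \eqref{e15d}, $q_{\varphi D_kG}(r)=\int_X \varphi\,D_kG\,d\sigma_r^G$. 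If $q_1(r_0)=0$ then $\sigma_{r_0}^G(X)=0$ by Theorem \ref{costruzione}, hence $\sigma_{r_0}^G$ is the zero measure, so the right-hand side vanishes for all such $k$ and all $\varphi\in C^1_b(X;\R)$. Thus
\[
\int_{G^{-1}(-\infty,r_0)}D_k\varphi\,d\mu = \int_{G^{-1}(-\infty,r_0)}\hat v_k\varphi\,d\mu,\qquad k\in\N,
\]
for every $\varphi\in C^1_b(X;\R)$. This says exactly that the set $A:=G^{-1}(-\infty,r_0)$ has a kind of ``zero Gaussian perimeter'': testing against cylindrical $\varphi$ shows $\one_A$ has vanishing $H$-gradient in the distributional (Gaussian-Sobolev) sense along every $v_k$, hence $\one_A\in W^{1,q}(X,\mu)$ with $D_H\one_A=0$. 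By the Gaussian Poincaré inequality (or directly, since a Sobolev function with zero gradient is $\mu$-a.e.\ constant — a consequence of the ergodicity of the Ornstein--Uhlenbeck semigroup, e.g.\ \cite[Ch.~5]{Boga}), $\one_A$ is $\mu$-a.e.\ constant, so $\mu(A)\in\{0,1\}$.

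Now I bring in $r_0\in(\essinf G,\esssup G)$: by definition of essential inf and sup, $\mu(G<r_0)>0$ and $\mu(G>r_0)>0$, and since level sets are $\mu$-negligible (Lemma \ref{Nualart}), $\mu(G\le r_0)=\mu(G<r_0)>0$ and also $\mu(G\le r_0)=1-\mu(G>r_0)<1$. This contradicts $\mu(A)\in\{0,1\}$, so $q_1(r_0)>0$, and the reverse inclusion follows. The main obstacle is the step asserting that $\one_A\in W^{1,q}(X,\mu)$ with zero $H$-gradient: one must check that the distributional identity $\int_A D_k\varphi\,d\mu=\int_A\hat v_k\varphi\,d\mu$ for all $\varphi\in C^1_b$ (equivalently for all smooth cylindrical $\varphi$, by density) genuinely places the bounded function $\one_A$ in the Sobolev space with vanishing gradient — this is where a closedness/duality argument for the operator $D_H$, together with boundedness of $\one_A$ giving $\one_A\in L^q$ for all $q$, is needed; once $\one_A$ is known to be Sobolev, vanishing of the gradient plus irreducibility forces it to be constant, and the rest is bookkeeping with essential infima and suprema.
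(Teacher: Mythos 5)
Your proposal is correct and follows the same skeleton as the paper's proof (easy inclusion from continuity of $q_1$; for the hard inclusion, deduce from $q_1(r_0)=0$ that all the relevant densities vanish at $r_0$, feed this into the integration by parts formula, and conclude that $\one_{G^{-1}(-\infty,r_0)}$ is $\mu$-a.e.\ constant, contradicting $\essinf G<r_0<\esssup G$). The one genuine divergence is in how you force $\one_A$ to be constant. You stay at first order: the identity $\int_A(D_k\varphi-\hat v_k\varphi)\,d\mu=0$ for all cylindrical $\varphi$ and all $k$ says that $\one_A$ lies in the ``integration-by-parts'' Sobolev class with zero generalized gradient, and you then need the nontrivial identification of that class with $W^{1,q}(X,\mu)$ (the equality $G^{q,1}=W^{q,1}$ of \cite[Thm.~5.3.5]{Boga}) plus the Poincar\'e inequality. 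The paper instead goes to second order: it takes $\varphi=D_k\psi\,D_kG$ in \eqref{parti_rozza}, sums over $k$ to get $\int_A L\psi\,d\mu=0$ for the Ornstein--Uhlenbeck operator $L$, extends to all $\psi\in D(L)$ by the core property, and uses that $0$ is an isolated simple eigenvalue of $L$, so that $(\operatorname{Range}L)^{\perp}$ consists of constants. Both routes rest on standard facts from \cite{Boga}; yours is conceptually more direct (zero weak gradient $\Rightarrow$ constant), the paper's avoids the distributional-vs.-completion identification of Sobolev spaces at the price of introducing $L$ and its spectral gap. You correctly flag your identification step as the point needing a citation rather than leaving it implicit.

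One small technical repair: you justify $q_{\varphi D_kG}(r_0)=0$ by writing $q_{\varphi D_kG}(r_0)=\int_X\varphi D_kG\,d\sigma_{r_0}^G$ ``by \eqref{e15d}'', but \eqref{e15d} applies only when $\varphi D_kG\in BUC(X;\R)$, i.e.\ in the case $D_kG\in BUC(X,\R)$. When $D_kG$ is merely in $W^{1,p'}(X,\mu)$ you should instead argue as the paper does: $q_\psi(r_0)=0$ for all $\psi\in BUC(X;\R)$ extends to all $\psi\in W^{1,p'}(X,\mu)$ by density of $C^1_b$ and estimate \eqref{e5m} (or, equivalently, invoke Lemma \ref{Le:identificazione}, integrating the trace against the zero measure $\sigma_{r_0}^G$). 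With that adjustment the argument goes through in both cases allowed by the hypothesis.
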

\begin{proof} By Theorem \ref{costruzione}, if   $q_1(r ) =0$ then  $\int_X \varphi \,d\sigma_r^G =0$ for every $\varphi \in BUC(X, \R)$, which implies $q_{\varphi}(r ) =0$ for every $\varphi\in BUC(X, \R)$. Approaching any $\varphi\in W^{1,p'}(X, \mu)$ by a sequence of $C^1_b$ functions $\varphi_n$, it follows that $q_{\varphi}(r ) =0$ for every $\varphi\in W^{1,p'}(X, \mu)$.  Taking $\varphi= D_k\psi D_kG$, with any cylindrical smooth $\psi$,  we have $q_{\varphi} (r )=0$, and formula \eqref{parti_rozza} yields 
$$  \int_{G^{-1}(-\infty, r)} (D_{kk}\psi  - \hat{v}_kD_k \psi )\,d\mu = 0. $$
Summing over $k$ and using \cite[Thm. 5.8.3, Rem. 5.8.7]{Boga} we obtain
\begin{equation}
\label{kernel}
 \int_{G^{-1}(-\infty, r)} L\psi  \,d\mu = 0, 
 \end{equation}
where $L$ is the realization of the Ornstein--Uhlenbeck operator  in $L^2(X, \mu)$. We recall (e.g., \cite[Thm. 5.7.1]{Boga}) that the domain of $L$ is $W^{2,2}(X, \mu)$, and the graph norm of $L$ is equivalent to the $W^{2,2}$-norm. This implies that the set of the
 cylindrical smooth functions is a core for $L$, and then \eqref{kernel} holds for every $\psi\in D(L)$. In other words, the characteristic function $\one_{G^{-1}(-\infty, r)}$ is orthogonal to the range of $L$. Since $0$ is an isolated simple eigenvalue of $L$, the orthogonal space to the range of $L$ consists of constant a.e. functions. Then, $\one_{G^{-1}(-\infty, r)}$ is constant $\mu$-a.e., which implies that either $\mu(G^{-1}(-\infty, r)) =0$ or $\mu(G^{-1}(-\infty, r)) =1$. 
So, $ q_1( r)= 0$ implies that $r\in (-\infty, \essinf G] \cup [\esssup G, +\infty)$. 

Conversely, the function $F_1$ is continuously differentiable, and it is constant in $(-\infty, \essinf G]$ and in $[\esssup G, +\infty)$, so that for every $r\in (-\infty, \essinf G] \cup [\esssup G, +\infty)$ we have $F_1'(r ) =q_1(r ) =0 $. 
\end{proof}

Let us go back to Proposition \ref{partirozza}. 
We recall that $q_{\varphi}(r) = \int_X \varphi\,d\sigma_r^G$ if $\varphi\in BUC(X; \R)$. Therefore, if $G$ and $\varphi$ are so smooth that $\varphi D_kG\in  BUC(X; \R)$, \eqref{parti_rozza} yields
\begin{equation}
\label{parti_noi}
\int_{G^{-1}(-\infty, r)} (D_k\varphi -  \hat{v}_k\varphi )\,d\mu = \int_{G^{-1}( r)} \varphi D_kG\, d\sigma_r^G. 
\end{equation}
For more general $G$ and $\varphi$ the above formula still holds, but it is not obvious. For the right hand side of \eqref{parti_noi} to make sense, we need conditions guaranteeing that  $\varphi D_kG$ has a trace at $G^{-1}( r)$, belonging to $L^1(X ,\sigma_r^G)$. Then, $\varphi D_kG$ in the right hand side integral should be interpreted in the sense of traces. 

The starting point is Lemma \ref{Nualartvarphi} and in particular formula \eqref{e5m}, applied to the function $|\varphi|$, that together with \eqref{e15d}
yields
\begin{equation}
\label{stimatracce}
\int_{G^{-1}( r)}|\varphi|\,d\sigma_r^G = q_{|\varphi|}( r)  \leq C \|\varphi\|_{W^{1,p'}(X, \mu)}, \quad \varphi\in C^1_b(X; \R). 
\end{equation}
Since  $C^1_b(X; \R)$ is dense in $W^{1,p'}(X, \mu)$, the above estimate is extended to the whole of $W^{1,p'}(X, \mu)$, and it allows to define the traces of such Sobolev functions at $G^{-1}( r)$. Indeed, approaching any $\varphi\in W^{1,p'}(X, \mu)$ by a sequence of $C^1_b$ functions $\varphi_n$, \eqref{stimatracce} implies that the sequence of the restrictions $\varphi_{n|G^{-1}( r)}$ to $G^{-1}( r)$ is a Cauchy sequence in $L^1(G^{-1}( r), \sigma_r^G)$, that converges to an element of $L^1(G^{-1}( r), \sigma_r^G)$. Still by \eqref{stimatracce}, such element does not depend on the approximating sequence. 

\begin{Definition}
Let $\varphi \in W^{1,p'}(X, \mu)$. The trace of $\varphi$ at $G^{-1}( r)$ is the limit in $L^1(G^{-1}( r), \sigma_r^G)$ of the sequence of the restrictions $\varphi_{n|G^{-1}( r)}$ to $G^{-1}( r)$, for every sequence of $C^1_b$ functions $\varphi_n$ that converges to $\varphi $ in $ W^{1,p'}(X, \mu)$. It is denoted by $\varphi_{|G^{-1}( r)}$. 
\end{Definition}

By definition, $\varphi_{|G^{-1}( r)} \in L^1(G^{-1}( r), \sigma_r^G)$, and $\| \varphi_{|G^{-1}( r)} \|_{L^1 (G^{-1}( r), \sigma_r^G)}\leq C \|\varphi\|_{W^{1,p'}(X, \mu)}$, where $C$ is the constant in  \eqref{stimatracce}.  In other words, the trace is a bounded operator from $ W^{1,p'}(X, \mu)$ to $L^1(G^{-1}( r), \sigma_r^G)$. 
If $\varphi \in W^{1,q}(X, \mu)$, with $q>p'$, then $|\varphi |^{q/p'}\in W^{1,p'}(X, \mu)$, and estimate \eqref{stimatracce} applied to $|\varphi |^{q/p'}$ yields that the trace of $\varphi$ at $G^{-1}( r)$  belongs to $L^{q/p'}(G^{-1}( r), \sigma_r^G)$, and the trace operator is bounded from $W^{1,q}(X, \mu)$ to $L^{q/p'}(G^{-1}( r), \sigma_r^G)$. 

The trace operator preserves positivity, as the next lemma shows.

\begin{Lemma}
\label{Le:positivita'}
Let $\varphi \in W^{1,p'}(X, \mu)$ have nonnegative values, $\mu$-a.e. Then for every $r\in \R$ the trace of $\varphi$ at $G^{-1}( r)$ has nonnegative values, $\sigma_r^G$-a.e.
\end{Lemma}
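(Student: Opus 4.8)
The statement to prove is Lemma~\ref{Le:positivita'}: if $\varphi\in W^{1,p'}(X,\mu)$ is nonnegative $\mu$-a.e., then its trace at $G^{-1}(r)$ is nonnegative $\sigma_r^G$-a.e. The plan is to use the characterization of the trace as an $L^1(G^{-1}(r),\sigma_r^G)$-limit of restrictions of $C^1_b$ functions, combined with the positivity of the functionals $q_\varphi(r)$ established along the way in Theorem~\ref{costruzione} and Lemma~\ref{Nualartvarphi}. The key point is that $\int_{G^{-1}(r)}\psi\,d\sigma_r^G = q_\psi(r)\ge 0$ whenever $\psi\ge 0$ is in $BUC(X;\R)$ (this is the positivity of $L$ in the proof of Theorem~\ref{costruzione}, since $F_\psi$ is then increasing so $F_\psi'(r)=q_\psi(r)\ge 0$), and more generally $q_\psi(r)\ge 0$ for every nonnegative $\psi\in W^{1,p'}(X,\mu)$ by approximation through \eqref{e5m}, exactly as in the proof of Lemma~\ref{HS}. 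Equivalently, $\sigma_r^G$ is a (nonnegative) Borel measure, so $\int_{G^{-1}(r)}w\,d\sigma_r^G\ge 0$ for every $\sigma_r^G$-measurable $w\ge 0$.

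The argument proceeds as follows. First I would reduce to showing that the trace $\varphi_{|G^{-1}(r)}$ satisfies $\int_{G^{-1}(r)}(\varphi_{|G^{-1}(r)})^-\,d\sigma_r^G = 0$. The difficulty is that the natural approximating sequence $\varphi_n\in C^1_b(X;\R)$ with $\varphi_n\to\varphi$ in $W^{1,p'}(X,\mu)$ need not consist of nonnegative functions, so one cannot directly pass positivity through the limit. The fix is standard: truncate. For $n\in\N$ set $\varphi^{(n)} := \min\{\varphi_n^+, n\}$ where $\varphi_n^+=\max\{\varphi_n,0\}$; alternatively, use that since $\varphi\ge 0$ we may replace $\varphi_n$ by $\varphi_n^+$ wherever convenient. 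The operations $t\mapsto t^+$ and $t\mapsto\min\{t,n\}$ are $1$-Lipschitz, hence (by the chain rule for Sobolev functions, \cite[Ch.\ 5]{Boga}) map $W^{1,p'}(X,\mu)$ into itself contractively on gradients, and $\varphi_n^+\to\varphi^+=\varphi$ in $W^{1,p'}(X,\mu)$. But $\varphi_n^+$ is no longer $C^1_b$. To stay inside $C^1_b$, instead mollify: replace the kink of $t^+$ by a smooth convex nondecreasing function $\eta_\delta$ with $\eta_\delta(t)=0$ for $t\le 0$, $\eta_\delta(t)=t-\delta$ for $t\ge 2\delta$, $0\le\eta_\delta'\le 1$; then $\eta_\delta\circ\varphi_n\in C^1_b(X;\R)$ is nonnegative and, choosing $\delta=\delta_n\to 0$ suitably, $\eta_{\delta_n}\circ\varphi_n\to\varphi$ in $W^{1,p'}(X,\mu)$ as well. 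This gives an approximating sequence $\widetilde\varphi_n\in C^1_b(X;\R)$ with $\widetilde\varphi_n\ge 0$ and $\widetilde\varphi_n\to\varphi$ in $W^{1,p'}(X,\mu)$.

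Now apply \eqref{stimatracce}: the restrictions $\widetilde\varphi_{n|G^{-1}(r)}$ form a Cauchy sequence in $L^1(G^{-1}(r),\sigma_r^G)$ converging to $\varphi_{|G^{-1}(r)}$ (independence of the approximating sequence is exactly the well-posedness of the Definition preceding this lemma). Each $\widetilde\varphi_{n|G^{-1}(r)}$ is the pointwise restriction of a nonnegative continuous function, hence is $\ge 0$ everywhere on $G^{-1}(r)$. Passing to a subsequence that converges $\sigma_r^G$-a.e., we conclude $\varphi_{|G^{-1}(r)}\ge 0$ $\sigma_r^G$-a.e. (or, more directly, $\int_{G^{-1}(r)}(\varphi_{|G^{-1}(r)})^-\,d\sigma_r^G \le \int_{G^{-1}(r)}|\varphi_{|G^{-1}(r)} - \widetilde\varphi_{n|G^{-1}(r)}|\,d\sigma_r^G \le C\|\varphi - \widetilde\varphi_n\|_{W^{1,p'}(X,\mu)}\to 0$, using that $s\mapsto s^-$ is $1$-Lipschitz and $\widetilde\varphi_{n|G^{-1}(r)}^-=0$). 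That finishes the proof.

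I expect the main obstacle to be purely technical: producing a \emph{smooth} (so that it qualifies as $C^1_b$) nonnegative approximating sequence converging in $W^{1,p'}(X,\mu)$, i.e. checking that the smoothed positive-part composition $\eta_{\delta_n}\circ\varphi_n$ converges to $\varphi$ in the Sobolev norm. This is routine — it only uses the chain rule for Lipschitz-with-smooth-modification functions of Sobolev functions, dominated convergence for $\eta_{\delta_n}(\varphi_n)\to\varphi$ in $L^{p'}$, and $|D_H(\eta_{\delta_n}\circ\varphi_n)|_H = |\eta_{\delta_n}'(\varphi_n)|\,|D_H\varphi_n|_H \le |D_H\varphi_n|_H \to |D_H\varphi|_H$ in $L^{p'}(X,\mu;H)$ together with a dominated-convergence argument on the pointwise limit. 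Everything else is an immediate consequence of estimate \eqref{stimatracce} and the nonnegativity of the measure $\sigma_r^G$.
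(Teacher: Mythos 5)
Your proof is correct and follows essentially the same route as the paper: approximate $\varphi$ in $W^{1,p'}(X,\mu)$ by the positive parts of a $C^1_b$ approximating sequence (the paper works with $\varphi_n^+$ directly, identifying its trace with its pointwise restriction via Proposition \ref{traccia=restrizione}, while you smooth the kink with $\eta_{\delta_n}$ so as to stay inside $C^1_b$ --- both work), and then pass the pointwise nonnegativity of the restrictions to the $L^1(G^{-1}(r),\sigma_r^G)$ limit using \eqref{stimatracce}. The only point worth making explicit in your ``routine'' convergence step is that identifying the pointwise limit of $\eta_{\delta_n}'(\varphi_n)\,D_H\varphi_n$ with $D_H\varphi$ on the set $\{\varphi=0\}$ requires $D_H\varphi=0$ $\mu$-a.e.\ there (\cite[Lemma 5.7.7]{Boga}), which is exactly the fact the paper's own proof invokes when estimating the integral over $\{\varphi_n\le 0\}$.
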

\begin{proof} 
Let $(\varphi_n)$ be a sequence of $C^1_b$ functions, converging to $\varphi$ in $W^{1,p'}(X, \mu)$. 
Possibly replacing $(\varphi_n)$ by a subsequence, we may assume that $(\varphi_n)$ converges to $\varphi $ pointwise $\mu$-a.e.

We claim that the sequence $(\varphi_n^+)$ (the positive parts of $\varphi_n$) still converges to $\varphi$ in $W^{1,p'}(X, \mu)$. Indeed, $\| \varphi_n^+-\varphi\|_{L^{p'}(X, \mu)} \leq \| \varphi_n -\varphi\|_{L^{p'}(X, \mu)}$, while, recalling that $D_H\varphi_n^+ = D_H\varphi_n$ in the set $\{x:\; \varphi_n(x)> 0\}$, and $D_H\varphi_n^+ = 0$ in the set $\{x:\; \varphi_n(x) \leq   0 \}$, $D_H\varphi  = 0$ in the set $\{x:\; \varphi(x) =  0 \}$  (\cite[Lemma 5.7.7]{Boga}) we obtain
\begin{equation}
\label{stima}
\begin{array}{l}
\ds \int_{X} |D_H\varphi_n^+-D_H\varphi|^{p'}_{H} d\mu = \int_{\{x:\; \varphi_n(x)> 0\}} |D_H\varphi_n^+-D_H\varphi|^{p'}_{H} d\mu 
+ \int_{\{x:\; \varphi_n(x) \leq 0\}} |D_H\varphi_n^+-D_H\varphi|^{p'}_{H} d\mu 
\\
\\
= \ds \int_{\{x:\; \varphi_n(x)> 0\}} |D_H\varphi_n -D_H\varphi|^{p'}_{H} d\mu 
+ \int_{\{x:\; \varphi_n(x) \leq  0 \}} | D_H\varphi|^{p'}_{H} d\mu 
\\
\\
\leq \ds \| D_H\varphi_n -D_H\varphi\|_{L^{p'}(X, \mu;H)}^{p'} +  \int_{\{x:\; \varphi_n(x) \leq  0, \,\varphi (x)>0 \}} | D_H\varphi|^{p'}_{H} d\mu 
.
\end{array}\end{equation}
Setting $A_n: =\{ x:\; \varphi_n(x) \leq  0, \,\varphi (x)>0 \}$,  then $\mu (A_n)$ vanishes as $n\to \infty$. This is because for every $x$ in the set 
$$A := \bigcap_{n=1}^{\infty} \bigcup_{k\geq n} A_k, $$
the sequence $(\varphi_n(x))$  does not converge to $\varphi(x)$, and therefore $0 = \mu(A) = \lim_{n\to \infty} \mu(\cup_{k\geq n} A_k)  \geq 
\limsup_{n\to \infty} \mu(A_n)$. 
Then, the right hand side of \eqref{stima} vanishes as $n\to \infty$, and this implies that $(\varphi_n)^+$ converges to $\varphi$ in $W^{1,p'}(X, \mu)$. Consequently, the traces of $(\varphi_n)^+$ at $G^{-1}( r)$ converge to the trace of $\varphi$ at $G^{-1}( r)$, in $L^1( G^{-1}( r), \sigma_r^G)$. 
Since each $(\varphi_n)^+$ has nonnegative values at every $x\in G^{-1}( r)$, then their $L^1$ limit has nonnegative values, $ \sigma_r^G$-a.e.
\end{proof}

Formula \eqref{e15d}  may now be extended to elements of $W^{1,p'}(X, \mu)$. 

\begin{Lemma}
\label{Le:identificazione}
For every $\varphi\in W^{1, p'}(X, \mu)$ and for every $r\in \R$ we have
\begin{equation}
\label{identificazione}
q_{\varphi} (r ) = \int_{G^{-1}( r)} \varphi_{|G^{-1}( r)}\, d\sigma_r^G .
\end{equation}
\end{Lemma}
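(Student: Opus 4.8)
The plan is to prove \eqref{identificazione} by a density argument, using that both sides of the identity are continuous in $\varphi$ with respect to the $W^{1,p'}$-norm, and that the identity already holds on the dense subspace $C^1_b(X;\R)$. First I would recall that for $\varphi\in C^1_b(X;\R)\subset BUC(X;\R)$, formula \eqref{e15d} gives exactly $q_\varphi(r)=\int_X\varphi\,d\sigma_r^G$, and since $\sigma_r^G$ is supported in $G^{-1}(r)$ (Theorem \ref{costruzione}) and the trace of a continuous function is just its restriction, this is \eqref{identificazione} for $\varphi\in C^1_b$.

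Next, given an arbitrary $\varphi\in W^{1,p'}(X,\mu)$, I would pick a sequence $(\varphi_n)\subset C^1_b(X;\R)$ with $\varphi_n\to\varphi$ in $W^{1,p'}(X,\mu)$. On the left side, $q_{\varphi_n}(r)\to q_\varphi(r)$ by the estimate \eqref{e5m} of Lemma \ref{Nualartvarphi}, applied to $\varphi_n-\varphi$: indeed $q$ is linear in its argument, so $|q_{\varphi_n}(r)-q_\varphi(r)|=|q_{\varphi_n-\varphi}(r)|\leq C\|\varphi_n-\varphi\|_{W^{1,p'}(X,\mu)}\to 0$. On the right side, by the very definition of the trace operator, $\varphi_{n|G^{-1}(r)}\to\varphi_{|G^{-1}(r)}$ in $L^1(G^{-1}(r),\sigma_r^G)$, hence $\int_{G^{-1}(r)}\varphi_{n|G^{-1}(r)}\,d\sigma_r^G\to\int_{G^{-1}(r)}\varphi_{|G^{-1}(r)}\,d\sigma_r^G$. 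Passing to the limit in the identity $q_{\varphi_n}(r)=\int_{G^{-1}(r)}\varphi_{n|G^{-1}(r)}\,d\sigma_r^G$ (valid for each $n$ since $\varphi_n\in C^1_b$ and its trace is its restriction) yields \eqref{identificazione}.

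There is essentially no serious obstacle here: the work has all been front-loaded into Lemma \ref{Nualartvarphi} (giving the crucial uniform bound \eqref{e5m}), Theorem \ref{costruzione} (giving \eqref{e15d} and the support property), and the construction of the trace operator (whose boundedness from $W^{1,p'}(X,\mu)$ to $L^1(G^{-1}(r),\sigma_r^G)$ is exactly what makes both sides continuous). The only point requiring a line of care is the compatibility of the two notions of "trace" for a $C^1_b$ function: one must note that for $\psi\in C^1_b(X;\R)$ the approximating sequence in the definition of the trace can be taken constantly equal to $\psi$, so the trace of $\psi$ is literally $\psi|_{G^{-1}(r)}$, and then $\int_X\psi\,d\sigma_r^G=\int_{G^{-1}(r)}\psi|_{G^{-1}(r)}\,d\sigma_r^G$ because $\sigma_r^G$ is concentrated on $G^{-1}(r)$. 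After that, the density/continuity argument closes the proof immediately.
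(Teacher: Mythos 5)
Your argument is correct and is essentially the same as the paper's: both pass to the limit in the identity $q_{\varphi_n}(r)=\int_X\varphi_n\,d\sigma_r^G$ (valid for $\varphi_n\in C^1_b$ by Theorem \ref{costruzione}), using the bound \eqref{e5m} on the left and the definition of the trace on the right. Your extra remark on why the trace of a $C^1_b$ function is its restriction, and on the role of the support property of $\sigma_r^G$, is a reasonable clarification the paper leaves implicit.
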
 
\begin{proof}
It is sufficient to approximate $\varphi$ in $ W^{1, p'}(X, \mu)$ by a sequence of functions $\varphi_n\in C^1_b(X; \R)$, and to let $n\to \infty $ in the equality
$$q_{\varphi_n} (r ) = \int_X  \varphi_n\, d\sigma_r^G , $$
that holds by Theorem \ref{costruzione}. The left hand side goes to $q_{\varphi} (r ) $ by estimate \eqref{e5m}, the right hand side goes to  $\int_{G^{-1}( r)} \varphi_{|G^{-1}( r)}\, d\sigma_r^G $ by the above construction of the trace of $\varphi$. 
\end{proof}

With the aid of Proposition \ref{capacity} we can prove that the traces of the elements of $W^{1,p'}(X, \mu)$ at $G^{-1}( r)$ coincide with the restrictions of their $C_{p'}$-quasicontinuous versions at $G^{-1}( r)$. In particular, if $\varphi$ is a continuous version of a Sobolev function, its trace is just the restriction of $\varphi$ at $G^{-1}( r)$. This justifies the notation $ \varphi_{|G^{-1}( r)}$ for the trace of $\varphi$ at $G^{-1}( r)$. 

\begin{Proposition}
\label{traccia=restrizione}
Let $\varphi $ be a $C_{p'}$-quasicontinuous version of an element of $W^{1,p'}(X, \mu)$. Then the trace of $\varphi $ at $G^{-1}( r)$ coincides with the restriction of $\varphi$  at $G^{-1}( r)$, $\sigma_r^G$-a.e.
\end{Proposition}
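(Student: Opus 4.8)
The plan is to approximate the $C_{p'}$-quasicontinuous version $\varphi$ simultaneously in $W^{1,p'}(X,\mu)$ and $C_{p'}$-quasiuniformly by smooth cylindrical functions $\varphi_n$, and then to compare the trace of $\varphi$ (the $L^1(G^{-1}(r),\sigma_r^G)$-limit of the restrictions $\varphi_{n|G^{-1}(r)}$, furnished by the construction preceding the Definition) with the pointwise restriction of $\varphi$ to $G^{-1}(r)$. The basic fact I would invoke (see \cite[Thm. 5.9.6]{Boga} and the surrounding discussion of Gaussian capacity) is that any element of $W^{1,p'}(X,\mu)$ admits smooth cylindrical approximants $\varphi_n \to \varphi$ in $W^{1,p'}(X,\mu)$ such that, after passing to a subsequence, $\varphi_n \to \tilde\varphi$ outside a set of arbitrarily small $C_{p'}$-capacity, where $\tilde\varphi$ is \emph{a} $C_{p'}$-quasicontinuous version; moreover any two $C_{p'}$-quasicontinuous versions agree outside a set of null $C_{p'}$-capacity, so $\tilde\varphi = \varphi$ there.

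Concretely, first I would fix such a subsequence and a Borel set $N$ with $C_{p'}(N)=0$ on whose complement $\varphi_n \to \varphi$ pointwise. By Proposition \ref{capacity}, $\sigma_r^G(N)=0$, so $\varphi_n \to \varphi$ pointwise $\sigma_r^G$-a.e. on $G^{-1}(r)$. Second, by the construction of the trace, the restrictions $\varphi_{n|G^{-1}(r)}$ form a Cauchy sequence in $L^1(G^{-1}(r),\sigma_r^G)$ converging to the trace $\varphi_{|G^{-1}(r)}$; passing to a further subsequence we may assume this $L^1$-convergence is also pointwise $\sigma_r^G$-a.e. on $G^{-1}(r)$. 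Third, a sequence of functions on $G^{-1}(r)$ cannot have two distinct pointwise $\sigma_r^G$-a.e. limits, so the trace $\varphi_{|G^{-1}(r)}$ equals the pointwise restriction of $\varphi$ to $G^{-1}(r)$, $\sigma_r^G$-a.e., which is the assertion.

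The step that carries the real content, and the one I expect to be the main obstacle, is establishing the quasiuniform (capacitary) convergence of the smooth cylindrical approximants together with the identification of the limit as the prescribed $C_{p'}$-quasicontinuous version $\varphi$; everything else is a routine subsequence argument built on Proposition \ref{capacity}. One must be careful that the approximating sequence can be chosen to converge both in the $W^{1,p'}$-norm and $C_{p'}$-quasiuniformly to the \emph{same} quasicontinuous version — this is exactly the content of the standard theory of Gaussian capacities, where $C_{p'}$-convergence of a norm-Cauchy sequence yields a quasicontinuous limit and all quasicontinuous versions coincide $C_{p'}$-quasieverywhere. Once this is in hand, the combination with $\sigma_r^G(N)=0$ for capacity-null $N$ immediately forces agreement of the trace with the restriction, and the proof is complete. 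As a final remark, the same argument shows that traces of $C_{p'}$-quasicontinuous Sobolev functions depend only on the $\sigma_r^G$-negligible-modification class, consistently with the remark following Proposition \ref{capacity}.
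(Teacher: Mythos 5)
Your argument is correct and follows essentially the same route as the paper's proof: smooth cylindrical approximants converging in $W^{1,p'}(X,\mu)$ and, along a subsequence, $C_{p'}$-quasi-everywhere to the quasicontinuous version (the paper cites \cite[Thm. 5.9.6(ii)]{Boga} for exactly this), then Proposition \ref{capacity} to upgrade quasi-everywhere convergence to $\sigma_r^G$-a.e.\ convergence, and finally uniqueness of pointwise a.e.\ limits against the $L^1(G^{-1}(r),\sigma_r^G)$-limit defining the trace. No gaps.
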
 
\begin{proof} We use   arguments similar to  \cite[Prop. 4.8]{Tracce}. 
Let $(\varphi_n)$ be a sequence of smooth cylindrical functions that converge to $\varphi $  in $W^{1,p'}(X, \mu)$. By \cite[Thm. 5.9.6(ii)]{Boga}, applied with the operator $T= (I-L)^{-1/2}$, a subsequence $(\varphi_{n_k})$ converges to $\varphi(x)$  for every $x$ expect at most on a set with zero Gaussian capacity $C_{p'}$. By Proposition \ref{capacity}, such a subsequence converges $\sigma_r^G$-a.e to $\varphi$. On the other hand, by the definition of the trace,   the restrictions of $\varphi_n$ to $G^{-1}( r)$  converge to $\varphi_{|G^{-1}( r)} $ in $L^1(G^{-1}( r), \sigma_r^G)$. In particular, a subsequence of $(\varphi_{n_k})$ converges to $\varphi_{|G^{-1}( r)} $, $\sigma_r^G$-a.e. Therefore, $\varphi_{|G^{-1}( r)} = \varphi$, $\sigma_r^G$-a.e.
\end{proof}

To extend  the integration by parts formula \eqref{parti_noi} to Sobolev functions we need some further assumptions on $G$.

\begin{Corollary}
\label{parti_generale}
Let $\varphi\in W^{1, q}(X, \mu)$ and  $D_kG \in W^{1,s}(X, \mu)$, with $q$, $s$ satisfying \eqref{q,s}. 
Then formula 
\eqref{parti_noi} holds, with  $\varphi D_kG$   replaced by $( \varphi D_kG)_{| G^{-1}( r)}$. 
\end{Corollary}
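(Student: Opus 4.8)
The plan is to combine Proposition \ref{partirozza} (second part, for Sobolev data) with Lemma \ref{Le:identificazione}, which identifies $q_{\varphi D_kG}(r)$ with a surface integral once we know $\varphi D_kG \in W^{1,p'}(X,\mu)$. First I would check that the hypotheses indeed place $\varphi D_kG$ in $W^{1,p'}(X,\mu)$: from $\varphi \in W^{1,q}(X,\mu)$, $D_kG \in W^{1,s}(X,\mu)$ and the exponent condition \eqref{q,s}, namely $1/q + 1/s + 1/p \le 1$, the product rule for Sobolev functions together with H\"older's inequality gives $\varphi D_kG \in W^{1,r}(X,\mu)$ for $1/r = 1/q + 1/s \le 1 - 1/p = 1/p'$, hence in particular $\varphi D_kG \in W^{1,p'}(X,\mu)$. (One must of course also recall that $D_H(\varphi D_kG) = D_kG\, D_H\varphi + \varphi\, D_H(D_kG)$, which is the standard Sobolev Leibniz rule and is legitimate under these integrability exponents.)

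Next I would invoke the second assertion of Proposition \ref{partirozza}: under exactly the stated hypotheses on $\varphi$, $D_kG$ and \eqref{q,s}, one has
\begin{equation*}
\int_{G^{-1}(-\infty, r)} (D_k\varphi - \hat v_k \varphi)\,d\mu = q_{\varphi D_kG}(r), \quad r \in \R.
\end{equation*}
Then, since $\varphi D_kG \in W^{1,p'}(X,\mu)$, Lemma \ref{Le:identificazione} applies to the function $\varphi D_kG$ in place of $\varphi$, yielding
\begin{equation*}
q_{\varphi D_kG}(r) = \int_{G^{-1}(r)} (\varphi D_kG)_{|G^{-1}(r)}\, d\sigma_r^G.
\end{equation*}
Chaining the two displayed equalities gives precisely formula \eqref{parti_noi} with $\varphi D_kG$ replaced by its trace $(\varphi D_kG)_{|G^{-1}(r)}$, which is the assertion of the corollary.

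I do not expect a genuine obstacle here; the corollary is essentially a bookkeeping combination of three earlier results. The one point that deserves a line of care is verifying the product estimate: one needs $1/q + 1/s \le 1$ so that the H\"older exponents make sense and $\varphi D_kG$ is at least in $L^1$, and then the gradient terms $D_kG\,D_H\varphi$ and $\varphi\, D_H(D_kG)$ are handled by H\"older with exponents adding up correctly to land in $L^{p'}(X,\mu;H)$ — this is exactly what \eqref{q,s} guarantees. A secondary subtlety is merely one of interpretation: the left-hand side of \eqref{parti_noi} involves $D_k\varphi$ for $\varphi \in W^{1,q}(X,\mu)$, which is the $H$-Sobolev derivative, and this is already the sense in which the integration formula of Proposition \ref{partirozza} was established, so no reinterpretation is needed there; the only object requiring the trace convention is the right-hand side surface integral, as indicated in the statement.
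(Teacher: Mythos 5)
Your proposal is correct and follows essentially the same route as the paper: note that \eqref{q,s} puts $\varphi D_kG$ in $W^{1,p'}(X,\mu)$, apply the second part of Proposition \ref{partirozza}, and identify $q_{\varphi D_kG}(r)$ with the surface integral of the trace via Lemma \ref{Le:identificazione}. The only difference is that you spell out the H\"older/Leibniz verification that the paper leaves as a one-line remark, which is a reasonable addition.
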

\begin{proof}
Note that $\varphi D_kG \in W^{1,p'}(X, \mu)$.   By Lemma \ref{Le:identificazione} we have
$$\int_{G^{-1}( r)}( \varphi D_kG)_{| G^{-1}( r)} \,d\sigma_r^G = q_{\varphi D_kG}( r).  $$
On the other hand,   Proposition \ref{partirozza}  yields
$$q_{\varphi D_kG}( r)= \int_{G^{-1}(-\infty, r)} (D_k\varphi -  \hat{v}_k\varphi )\,d\mu, $$
and the statement follows. 
\end{proof}
  
Corollary \ref{parti_generale} yields an integration by parts formula for $\varphi\in W^{1,s}(X, \mu)$ for any $s>1$, provided $G$ is good enough. In particular, if next assumption \eqref{FP} holds, then $G$ satisfies \eqref{G} and the conditions of Corollary \ref{parti_generale} with any $s>1$, and \eqref{parti_noi} holds for $\varphi\in W^{1,q}(X, \mu)$ for any $q>1$.

\subsection{Dependence on $G$, and relationship with other surface measures}
\label{varie}

Now we are ready to compare the measures $\sigma_r^G$  defined in \S \ref{Surface} with the perimeter measure and with the Hausdorff-Gauss surface measure $\rho$ of Feyel and de La Pradelle \cite{FP}. 

We use the notation of \cite{AMMP}. We recall that a subset $B\subset X$ is said to have finite perimeter if $\one_B$ is a bounded variation function, namely there exists a $H$-valued measure $\Gamma$ such that for every $k\in \N$ and for every smooth cylindrical function $\varphi$ we have 
$$\int_B (D_k\varphi - \hat{v}_k\varphi)d\mu = \int_X \varphi \,d\gamma_k,  $$
with $\gamma_k = \langle \Gamma, v_k\rangle_H$. In this case, $\Gamma $ is unique,  it is called perimeter measure, and denoted by $D_{\mu} \one_B$. 

If   $G\in W^{2,p'}(X, \mu)$, 
then for every $r\in \R$ the set $B= G^{-1}(-\infty, r)$ satisfies the above condition, with $D_{\mu} \one_{G^{-1}(-\infty, r)} = D_HG_{|G^{-1}(r )} \,\sigma_r^G$. Indeed, by formulae \eqref{parti_rozza} (applied to $\varphi $) 
and \eqref{identificazione} (applied to $\varphi D_kG$),  for every for smooth cylindrical $\varphi$ and for every $k\in \N$ we have
$$\int_{G^{-1}(-\infty, r)} (D_k\varphi -  \hat{v}_k\varphi )\,d\mu = \int_{X} (\varphi D_kG)_{| G^{-1}( r)} \,d\sigma_r^G .$$
On the other hand, for smooth cylindrical $\varphi$ the trace of $\varphi D_kG$ at the support $G^{-1}( r)$ of $\sigma_r^G$ coincides $\sigma_r^G$-a.e. with the restriction of $\varphi \widetilde{D_kG}$ at $G^{-1}( r)$, where $\widetilde{D_kG}$ is any $C_{p'}$-quasicontinuous version of $D_kG$, 
by Proposition \ref{traccia=restrizione}.

Let us now recall the assumptions of Feyel \cite{F}, 

\begin{equation}
\label{FP}
G\in \bigcap_{p>1}W^{2, p}(X, \mu), \quad \frac{1}{|D_HG|_H} \in \bigcap_{p>1} L^p(X, \mu), 
\end{equation}
under which it was proved that for every  $\varphi \in W^{1,q}(X; \R)$ for some $q>1$, the density $q_{\varphi}$ of the signed measure $\varphi \mu\circ G^{-1}$  with respect to the Lebesgue measure is  given by
\begin{equation}
\label{Feyel}
q_{\varphi}( r) = \int_{G^{-1}(r )} \frac{\varphi}{|D_HG|_H}\,d\rho, \quad r\in \R. 
\end{equation}
In the right hand side, $\varphi$ and $|D_HG|_H$ are quasicontinuous versions of the respective Sobolev elements. More precisely, 
$\varphi$ is $C_q$-quasicontinuous and $|D_HG|_H$ is $C_p$-quasicontinuous for every $p>1$. 
See \cite{F,Tracce}.

\begin{Proposition}
Let $G$ satisfy \eqref{FP}. Then for every Borel set $B\subset X$ we have
$$\sigma_r^G (B) = \int_{B\cap G^{-1}( r)}\frac{1 }{|D_HG|_H}\,d\rho , \quad r\in \R. $$
\label{comparison}
\end{Proposition}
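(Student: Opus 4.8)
The plan is to show that the two measures $\sigma_r^G$ and the weighted measure $B\mapsto \int_{B\cap G^{-1}(r)}|D_HG|_H^{-1}\,d\rho$ agree on a class of Borel sets rich enough to force equality as Borel measures. Since both are finite Borel measures on the Polish space $X$ (finiteness of the right-hand side follows from \eqref{Feyel} with $\varphi\equiv 1$, which gives total mass $q_1(r)<\infty$), it suffices to check that they integrate every $\varphi\in BUC(X;\R)$ to the same value. By the defining property \eqref{e15d} of $\sigma_r^G$, for such $\varphi$ we have $\int_X\varphi\,d\sigma_r^G = q_\varphi(r)$, where $q_\varphi$ is the density of $\varphi\mu\circ G^{-1}$. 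On the other hand, since \eqref{FP} holds, the hypotheses of Feyel's result are met, and for $\varphi\in BUC(X;\R)$ we certainly have $\varphi\in W^{1,q}(X,\mu)$ for every $q>1$ (a bounded continuous function need not be Sobolev in general, so here one should instead take $\varphi$ Lipschitz and bounded, or first reduce to that case as in Proposition \ref{C^1}), so \eqref{Feyel} gives $q_\varphi(r)=\int_{G^{-1}(r)}\varphi|D_HG|_H^{-1}\,d\rho$.

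Concretely, I would proceed as follows. First, fix $r\in\R$ and introduce the finite Borel measure $\nu_r(B):=\int_{B\cap G^{-1}(r)}|D_HG|_H^{-1}\,d\rho$; it is genuinely a measure because $\rho$ restricted to $G^{-1}(r)$ is a finite Borel measure (total mass bounded via \eqref{Feyel}) and $|D_HG|_H^{-1}$ is a nonnegative $\rho$-integrable (indeed $C_p$-quasicontinuous, hence $\rho$-measurable) density. Second, for $\varphi$ Lipschitz continuous and bounded, combine \eqref{e15d}, the identity $q_\varphi(r)=F_\varphi'(r)$, and Feyel's formula \eqref{Feyel} to get $\int_X\varphi\,d\sigma_r^G = q_\varphi(r) = \int_{G^{-1}(r)}\varphi|D_HG|_H^{-1}\,d\rho = \int_X\varphi\,d\nu_r$. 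Third, extend from Lipschitz bounded $\varphi$ to all of $BUC(X;\R)$ by the uniform approximation of \cite{Mic} (Theorem 1 there, as used in Proposition \ref{C^1}): if $\varphi_n\to\varphi$ uniformly with $\varphi_n$ Lipschitz bounded, then $\int\varphi_n\,d\sigma_r^G\to\int\varphi\,d\sigma_r^G$ and $\int\varphi_n\,d\nu_r\to\int\varphi\,d\nu_r$ because both measures are finite, whence $\int_X\varphi\,d\sigma_r^G=\int_X\varphi\,d\nu_r$ for all $\varphi\in BUC(X;\R)$. Fourth, invoke the uniqueness clause of Theorem \ref{costruzione}: a finite Borel measure on $\mathcal B(X)$ is determined by its action on $BUC(X;\R)$ (this is exactly what makes $\sigma_r^G$ unique), so $\sigma_r^G=\nu_r$, which is the claimed identity.

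The main obstacle I anticipate is the measurability/well-definedness of the right-hand side together with the legitimacy of applying Feyel's formula \eqref{Feyel} to test functions that are only in $BUC$ rather than in a Sobolev space: \eqref{Feyel} as quoted requires $\varphi\in W^{1,q}(X,\mu)$ for some $q>1$, and a generic bounded uniformly continuous function is not Sobolev. The clean way around this is to do everything first for Lipschitz bounded $\varphi$ — which \emph{are} in $W^{1,q}$ for every $q$ by \cite[Ex.\ 5.4.10]{Boga} — and only then pass to $BUC$ by density, exactly mirroring the two-step structure of the proof of Proposition \ref{C^1}. A secondary point worth a line is checking that the trace of $|D_HG|_H^{-1}$ appearing implicitly (via $\rho$ living on $G^{-1}(r)$ and the quasicontinuous version) is $\rho$-a.e.\ finite and positive, which is guaranteed by the second condition in \eqref{FP} and by the fact, from Proposition \ref{traccia=restrizione} and \cite{Tracce}, that quasicontinuous versions restrict to $\rho$-measurable functions on level sets; this ensures $1/|D_HG|_H$ really is an $L^1(G^{-1}(r),\rho)$ density and $\nu_r$ is a bona fide finite measure.
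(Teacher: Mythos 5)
Your proposal is correct and follows essentially the same route as the paper: the paper's proof simply compares \eqref{Feyel} with \eqref{e15d} on $C^1_b(X;\R)$ test functions (which lie both in $BUC(X;\R)$ and in every $W^{1,q}(X,\mu)$) and concludes that the two finite Borel measures coincide. Your substitution of bounded Lipschitz functions for $C^1_b$ ones, the subsequent uniform-approximation step to reach all of $BUC(X;\R)$, and the explicit appeal to the uniqueness clause of Theorem \ref{costruzione} are just a more detailed rendering of the same argument.
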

\begin{proof}
Note that if \eqref{FP} holds, then $G$ satisfies \eqref{G}. Comparing \eqref{Feyel} with \eqref{e15d} yields
$$\int_X \varphi\,d\sigma_r^G = \int_{G^{-1}(r )} \frac{\varphi}{|D_HG|_H}\,d\rho,  \quad \varphi\in C^1_b(X; \R), $$
and the statement holds.  \end{proof}

\begin{Corollary}
Let $G_1$, $G_2$ satisfy \eqref{FP}. 
Assume that for some $r_1$, $r_2\in \R$ we have $G_1^{-1}( r_1) = G_2^{-1}( r_2) := \Sigma$.  Then
$$|D_HG_1(r_1)|_H d\sigma_{r_1}^{G_1} = |D_HG_1(r_2)|_H d\sigma_{r_2}^{G_2} = \rho_{|\Sigma}. $$
\end{Corollary}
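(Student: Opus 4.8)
The plan is to apply Proposition \ref{comparison} to each of $G_1$ and $G_2$ separately, and then read off the stated equalities by comparing the resulting formulas on the common surface $\Sigma$. Since both $G_1$ and $G_2$ satisfy \eqref{FP}, Proposition \ref{comparison} gives, for every Borel set $B\subset X$,
\[
\sigma_{r_1}^{G_1}(B) = \int_{B\cap \Sigma}\frac{1}{|D_HG_1|_H}\,d\rho,
\qquad
\sigma_{r_2}^{G_2}(B) = \int_{B\cap \Sigma}\frac{1}{|D_HG_2|_H}\,d\rho,
\]
where we have used $G_1^{-1}(r_1) = G_2^{-1}(r_2) = \Sigma$. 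Here $|D_HG_1|_H$ and $|D_HG_2|_H$ are understood as their $C_p$-quasicontinuous versions (for every $p>1$), as in the statement of \eqref{Feyel}; note that under \eqref{FP} both gradients are nonzero $\rho$-a.e.\ on $\Sigma$, so the reciprocals make sense as $\rho$-measurable functions on $\Sigma$, and $\rho_{|\Sigma}$ is a well-defined finite measure by the properties of $\rho$ established in \cite{FP}.

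First I would rewrite these two identities as statements about densities with respect to $\rho_{|\Sigma}$: they say exactly that
\[
d\sigma_{r_1}^{G_1} = \frac{1}{|D_HG_1|_H}\,d\rho_{|\Sigma},
\qquad
d\sigma_{r_2}^{G_2} = \frac{1}{|D_HG_2|_H}\,d\rho_{|\Sigma}
\]
as measures on $\mathcal B(X)$ (both supported on $\Sigma$). Multiplying the first by the quasicontinuous version of $|D_HG_1|_H$ and the second by that of $|D_HG_2|_H$ yields
\[
|D_HG_1|_H\,d\sigma_{r_1}^{G_1} = \rho_{|\Sigma} = |D_HG_2|_H\,d\sigma_{r_2}^{G_2},
\]
which is the asserted chain of equalities (modulo the evident typos $|D_HG_1(r_1)|_H$, $|D_HG_1(r_2)|_H$ in the statement, which should read $|D_HG_1|_H$ and $|D_HG_2|_H$, evaluated/restricted to $\Sigma$). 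In the first displayed step one should be slightly careful: the identity $\sigma_{r_1}^{G_1}(B)=\int_{B\cap\Sigma}|D_HG_1|_H^{-1}\,d\rho$ determines the measure $\sigma_{r_1}^{G_1}$, and since $|D_HG_1|_H^{-1}>0$ $\rho$-a.e.\ on $\Sigma$, the measure $\rho_{|\Sigma}$ is the unique measure with $d\rho_{|\Sigma} = |D_HG_1|_H\,d\sigma_{r_1}^{G_1}$; this is just the change-of-density formula for mutually absolutely continuous measures and requires no new argument.

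The only genuine point to check is that the quasicontinuous versions appearing when one applies Proposition \ref{comparison} are consistent, i.e.\ that $|D_HG_1|_H$ in the formula for $\sigma_{r_1}^{G_1}$ really is a $\rho$-a.e.\ (equivalently, $\sigma_{r_1}^{G_1}$-a.e.) well-defined function on $\Sigma$; this follows from Proposition \ref{traccia=restrizione} together with Proposition \ref{capacity}, exactly as used in the proof of Proposition \ref{comparison}, since under \eqref{FP} we have $|D_HG_i|_H\in W^{1,p}(X,\mu)$ for every $p>1$ and $C_{p'}$-null sets are $\sigma_{r_i}^{G_i}$-negligible. There is no serious obstacle here: the whole argument is a two-line consequence of Proposition \ref{comparison}, and the main (entirely routine) care is bookkeeping of which quasicontinuous representative is meant and observing that the hypothesis $G_1^{-1}(r_1)=G_2^{-1}(r_2)$ is precisely what makes the two $\rho$-integrals live on the same set $\Sigma$.
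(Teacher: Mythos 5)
Your proposal is correct and is exactly the argument the paper intends: the Corollary is stated without proof as an immediate consequence of Proposition \ref{comparison}, applied once to $G_1$ at level $r_1$ and once to $G_2$ at level $r_2$, with the hypothesis $G_1^{-1}(r_1)=G_2^{-1}(r_2)=\Sigma$ making both weighted $\rho$-integrals live on the same surface. Your added care about quasicontinuous versions and the $\rho$-a.e.\ positivity of $|D_HG_i|_H$ on $\Sigma$, as well as your reading of the typos in the displayed formula, is accurate.
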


We recall that the assumptions of \cite{AM} are 
\begin{equation}
\label{AM}
G\in \bigcap_{k\in \N, \,p>1}W^{k, p}(X, \mu), \quad \frac{1}{|D_HG|_H} \in \bigcap_{p>1} L^p(X, \mu), 
\end{equation}
and that the measures $\nu_r$ constructed in \cite{AM,Boga} under such assumptions, for all $r\in \R$ such that $q_1(r )>0$, coincide with the restriction of 
$\rho$ to $G^{-1}(r )$. This is because they satisfy the same integration by parts formula,  
\begin{equation}
\label{parti_loro}
\int_{G^{-1}(-\infty, r)} (D_k\varphi -  \hat{v}_k\varphi )\,d\mu = \int_{G^{-1}( r)} \frac{\varphi D_kG}{|D_HG|_H}\, d\nu_r 
=  \int_{G^{-1}( r)} \frac{\varphi D_kG}{|D_HG|_H}\, d\rho,  \quad \varphi\in  \bigcap_{k\in \N, \,p>1}W^{k, p}(X, \mu), 
\end{equation}
and replacing $\varphi$ by $\varphi D_kG/ |D_HG|_H$ and summing up, we obtain 
$$\int_{G^{-1}( r)} \varphi  \, d\nu_r 
=  \int_{G^{-1}( r)} \varphi  \, d\rho,  \quad \varphi\in  \bigcap_{k\in \N, \,p>1}W^{k, p}(X, \mu)$$
which  implies the statement.

\vspace{3mm}

\appendix
\section{Proof of the disintegration theorem}

 We follow here \cite{LS}. For any $A\in \mathcal B(X)$ we  consider    the conditional expectation 
 $\E[\one_A|\Gamma] $, which may be expressed as $f_A\circ \Gamma$ for some Borel function $f_A:\R \mapsto \R$. 
So,  for any $I\in \mathcal B(\R)$ we have
$$
\int_{\Gamma ^{-1}(I)}\one_A\,d\mu=\int_{\Gamma ^{-1}(I)}(f_A\circ \Gamma) \,d\mu=\int_I f_A( r)\, (\mu\circ \Gamma^{-1})(dr)
$$
which we rewrite as 
\begin{equation}
\label{eA.3}
\mu(A\cap \Gamma^{-1}(I)) = \int_I f_A( r) \lambda (dr).
\end{equation}
Since $X$ is separable,  there exists  $K\subset\R$ with $\lambda(K)=0$ and  for any $r\notin K$ a Borel measure $m_r$ on $\R$ such that
$$f_A(r)=m_r(A),\quad\forall\;r\notin K.$$
See e.g. \cite[Theorem 10.2.2]{D}. 
Replacing in  \eqref{eA.3} we obtain
 \begin{equation}
\label{eA.4}
\mu(A\cap \Gamma ^{-1}(I)) =\int_I m_r(A)\lambda (dr),\quad I\in\mathcal B(\R). 
\end{equation}

It is enough to prove that  \eqref{eA.1} holds for  $\varphi=\one_{\Gamma^{-1}(J)}$  with $J\in \mathcal B(X)$.  In this case, 
 $$
 \int_X\varphi(x)m_r(dx)=m_r(\Gamma^{-1}(J))
 $$
and integrating with respect to $\lambda$ over $\R$ and taking into account of \eqref{eA.4} with $I=\R$, yields
$$\int_\R \left(\int_{X} \varphi(x)m_r(dx)   \right)\lambda (dr)=\int_\R m_r( \Gamma^{-1}(J))\lambda (dr)=\mu(\Gamma^{-1}(J)),
$$
as claimed.\medskip

Let us show that for $\lambda$-a.e. $r_0\in \R$, the support of $m_{r_0}$ is contained in $\Gamma^{-1}(r_0)$. If $I$ is any interval, setting  $A=\Gamma^{-1}(\R \setminus I)$ in \eqref{eA.4}, 
we find
$$0 =\int_{I} m_r (\Gamma^{-1}(\R \setminus I)) \lambda (dr), $$
so that  $m_r (\Gamma^{-1}(\R \setminus I))=0$ for $\lambda$-almost all $r\in I$, say for every $r\in I\setminus J_I$ with $\lambda (J_I)=0$. Now, let us consider all the open intervals with rational endpoints, $I=(a_n, b_n)$ with $a_n<b_n\in \Q$. The set $J := \cup_{n\in \N} J_{(a_n, b_n)}$  is still $\lambda$-negligible, and   we have 
\begin{equation}
\label{eA.5}
m_r (\Gamma^{-1}((-\infty, a_n] \cup [b_n, +\infty) ) )= 0, \quad n\in \N, \;r\in (a_n, b_n)\setminus J. 
\end{equation}
For every $r_0\in \R\setminus J$, fix two subsequences $(a_{n_k})$, $(b_{n_k})$ such that $a_{n_k}< r_0$, $b_{n_k}>r_0$ and $\lim_{k\to \infty}
a_{n_k} =\lim_{k\to \infty}b_{n_k} =r_0$. Taking $r=r_0$ and replacing $a_n$, $b_n$ by $a_{n_k}$, $b_{n_k}$ in  \eqref{eA.5}, we obtain that $m_{r_0}$ has support contained in $(a_{n_k}, b_{n_k})$ for every $k\in \N$, and the statement follows.

\end{document}